\renewcommand{\subsection}[1]{\vspace{.18in}\par\noindent\addtocounter{subsection}{1}
\setcounter{equation}{0}{\bf\thesubsection.\hspace{5pt}#1}}
\theoremstyle{definition}
\newtheorem{Rem}[subsection]{Remark}
\theoremstyle{plain}
\newtheorem{Prop}[subsection]{Proposition}
\newtheorem{Thm}[subsection]{Theorem}
\newtheorem{Lem}[subsection]{Lemma}
\newtheorem{Coro}[subsection]{Corollary}
\numberwithin{equation}{subsection}
\newcommand{\spann}{\mathrm{span}}
\newcommand{\sk}{s_k}
\newcommand{\tuk}{{u}_k}
\newcommand{\han}{\subseteq}
\def\leq{\leqslant}\def\geq{\geqslant}
\newcommand{\hl}{\widehat L}
\newcommand{\soc}{\mathrm{soc}}
\newcommand{\dt}{\delta}
\newcommand{\Dt}{\Delta}
\newcommand{\Og}{\Omega}
\newcommand{\og}{\omega}
\newcommand{\up}{\upsilon}
\newcommand{\ep}{\varepsilon}
\newcommand{\al}{\alpha}
\newcommand{\bt}{\beta}
\newcommand{\ga}{\gamma}
\newcommand{\h}{\widehat}
\newcommand{\ti}{\tilde}
\newcommand{\zr}{\zeta_r}
\newcommand{\ot}{\otimes}
\newcommand{\bfOg}{{\bf\Omega}}
\newcommand{\sZ}{\mathcal Z}
\newcommand{\ba}{\bar}
\newcommand{\ol}{\overline}
\newcommand{\ra}{\rightarrow}
\newcommand{\la}{\lambda}
\newcommand{\La}{\Lambda}
\newcommand{\lb}{{\overline{\lambda}}}
\newcommand{\mb}{{\overline{\mu}}}
\newcommand{\mbn}{\mathbb N}
\newcommand{\mbq}{\mathbb Q}
\newcommand{\mbz}{\mathbb Z}
\newcommand{\bfU}{\mathbf{U}}
\newcommand{\Ga}{\Gamma}
\newcommand{\rad}{\operatorname{rad}}
\newcommand{\End}{\operatorname{End}}
\newcommand{\Hom}{\operatorname{Hom}}
\newcommand{\Ogk}{\Og_k}
\newcommand{\sI}{\boldsymbol{\mathcal I}}
\newcommand{\sJ}{\boldsymbol{\mathcal J}}
\newcommand{\sB}{\frak{B}}
\newcommand{\hsB}{\widehat{\frak B}}
\newcommand{\St}{\mathrm {St}}
\begin{document}
\title{On the structure of $\End_{u_k(2)}(\Ogk^{\otimes r})$}

\author{Qiang Fu}
\address{Department of Mathematics, Tongji University, Shanghai, 200092, China.}
\email{q.fu@hotmail.com}
\author{Qunguang Yang}
\address{Department of Mathematics, Tongji University, Shanghai, 200092, China.}
\email{ynmyff@163.com}
\date{\today}

\thanks{Supported by the National Natural Science Foundation
of China(10971154) and the Program NCET}

\sloppy \maketitle

\begin{abstract}
Let $u_k(2)$ be the infinitesimal quantum $\frak{gl}_2$ over $k$, where $k$ is a field
containing an $l$th primitive root $\ep$ of 1 with $l\geq 3$ {\it
odd}. We will determine the basic algebra for $\End_{u_k(2)}(\Ogk^{\otimes r})$, where $\Ogk$ is the natural module for $u_k(2)$.
\end{abstract}

\section{Introduction}

Infinitesimal quantum groups (i.e., small quantum groups) are important finite dimensional Hopf algebras introduced  by G. Lusztig in \cite{Lu90}. They are quantum versions of restricted enveloping algebras in modular Lie theory and they are related to many other mathematical object. For example, certain important relation between infinitesimal quantum groups and logarithmic conformal field theories has been found in \cite{FGST}.

Let $k$ be a field
containing an $l$th primitive root $\ep$ of 1 with $l\geq 3$ {\it
odd}.
Let $u_k(2)$ be the infinitesimal quantum $\frak{gl}_2$ over $k$ and let $\Og_k$ be the natural module for $u_k(2)$. Since $u_k(2)$ is a Hopf algebra, $\Og_k^{\ot r}$ becomes a $u_k(2)$-module. The module $\Og_k^{\ot r}$ is very important since it contains many information for $u_k(2)$. We are interested in the algebra  $\End_{u_k(2)}(\Ogk^{\ot r})$. In this paper, we will study the basic algebra for $\End_{u_k(2)}(\Ogk^{\otimes r})$. We expect that our results can be related to some sort of Schur--Weyl duality for infinitesimal quantum groups.

In \cite{DFW1,Fu07}, the little $q$-Schur algebra $u_k(n,r)$ was introduced as a homomorphic image of the infinitesimal quantum group $u_k(n)$ and the symmetry structure for $u_k(n,r)$ was investigated
through the construction of various bases of monomial, BLM and PBW types for $u_k(n,r)$. Representation theory for $u_k(n,r)$ was studied in \cite{DFW2}. Little $q$-Schur algebras are closely related to infinitesimal $q$-Schur algebras introduced in  \cite{DNP961,Cox97,Cox00} (see \cite{Fu05}) and they are useful in the investigation of $\End_{u_k(2)}(\Ogk^{\otimes r})$ since $\End_{u_k(2)}(\Ogk^{\otimes r})=\End_{u_k(2,r)}(\Ogk^{\otimes r})$.

We organize this paper as follows. We will recall the definition of little $q$-Schur algebras in \S2. Using the result of \cite{CP}, the basic algebra of the infinitesimal quantum $\frak{sl}_2$ will be studied in \S3. In \S4, we will classify
semisimple blocks for the infinitesimal $q$-Schur algebra $s_k(2,r)$ and the little $q$-Schur algebra $u_k(2,r)$.
Finally, we will  determine the basic algebra for $\End_{u_k(2)}(\Ogk^{\ot r})$ in \S5.

%Since $\Og_k^{\ot r}$ is a $u_k(2)$-module,
%we have a natural algebra homomorphism $\zeta_r:u_k(2)\ra\End(\Ogk^{\ot r})$.
%The algebra $u_k(2,r):=\zeta_r(u_k(2))$ is called a little $q$-Schur algebra.

%Throughout this paper, let $\sZ=\mbz[\up,\up^{-1}]$, where $\up$ is an indeterminant,
%and let $\mbq(\up)$ be the fraction field of $\sZ$.
%Let $k$ be a field
%containing an $l$th primitive root $\ep$ of 1 with $l>1$ {\it
%odd}. Specializing $\up$ to $\ep$, $k$ will be viewed as an $\mc
%Z$-module.
\section{The little $q$-Schur algebra}

The quantum enveloping algebra of $\frak{gl}_2$ is the algebra
$\bfU(2)$ over $\mathbb Q(\up)$ (with $\up$ an indeterminate) presented by generators
$$E,\ F,\ K_1^{\pm 1},\ K_2^{\pm 1}$$
and relations
\begin{itemize}
\item[(a)] $ K_{1}K_{2}=K_{2}K_{1},\ K_{1}K_{1}^{-1}=1,\ K_{2}K_{2}^{-1}=1;$

\item[(b)] $ K_{1}E=\up EK_{1},\ K_{2}E=\up^{-1}EK_{2};$

\item[(c)] $ K_{1}F=\up^{-1} FK_{1}\ K_{2}F=\up FK_{2};$

\item[(d)] $ EF-FE=\frac
{K-K^{-1}}{\up-\up^{-1}},\
where \  {K} =K_{1}K_{2}^{-1}.$
\end{itemize}
The algebra $\bfU(2)$ is a Hopf algebra with comultiplication
$\Dt$ defined on generators by $ \Dt(E)=E\ot K+1\ot
E$, $\Dt(F)=F\ot 1+K^{-1}\ot F$, $\Dt(K_i)=K_i\ot
K_i$. Let $\sZ=\mbz[\up,\up^{-1}]$.
Following \cite{Lu90}, let $U_\sZ(2)$  be the $\sZ$-subalgebra of $\bfU(2)$ generated by all $E^{(m)}$, $F^{(m)}$, $K_i^{\pm 1}$ and
$\left[ {K_i;0 \atop t} \right]$, where for $m,t\in\mathbb N$,
$$E^{(m)}=\frac{E^m}{[m]^!},\,\,F^{(m)}=\frac{F^m}{[m]^!},\text{ and }
\bigg[ {K_i;0 \atop t} \bigg] =
\prod_{s=1}^t \frac
{K_i\up^{-s+1}-K_i^{-1}\up^{s-1}}{\up^s-\up^{-s}}$$
with $[m]^{!}=[1][2]\cdots[m]$ and
$[i]=\frac{\up^i-\up^{-i}}{\up-\up^{-1}}$.

Let $k$ be a field containing an $l$th primitive root $\ep$ of 1
with $l\geq 3$ odd. Specializing
$\up$ to $\ep$, $k$ will be viewed as an $\sZ$-module.
Let $U_k(2)=U_\sZ(2)\otimes_\sZ k$. We will denote the
images of $E$, $F$, $K_i$ in $U_k(2)$ by the same letters.
Let $\ti u_k(2)$ be the $k$-subalgebra of $U_k(2)$ generated by the
elements $E$, $F$, $K_i^{\pm 1}$ for all $i$. The algebra
$u_k(2)=\ti u_k(2)/\langle K_1^l-1,K_2^l-1\rangle$ is the infinitesimal quantum group of $\frak{gl}_2$ (cf. \cite{Lu90}).

Let $\Og$ be a free $\sZ$-module with basis
$\{\og_i\mid 1\leq i\leq 2\}$ and let $\bfOg=\Og\otimes\mbq(\up)$.
Then $\bfU(2)$ acts naturally on $\bfOg$ by
$K_a\og_b=\up^{\dt_{a,b}}\og_b$,
$E\og_b=\dt_{2,b}\og_{b-1}$ and
$F\og_b=\dt_{1,b}\og_{b+1}$. Since $U_\sZ(2)\Og\han\Og$, $\Ogk:=\Og\ot k$ becomes a $U_k(2)$-module. Thus the
tensor space $\Ogk^{\ot r}$ is an $U_k(2)$-module via the
comultiplication $\Dt$ on $U_k(2)$. Consequently, we get an algebra homomorphism
$\zeta_r:U_k(2)\ra\End(\Ogk^{\ot r})$.
Then $S_k(2,r):=\zeta_r(U_k(2))$ is the $q$-Schur algebra over $k$ (see \cite{Du95}). The algebra $\tuk(2,r)=\zeta_r(\ti u_k(2))$ is called a little $q$-Schur algebra (cf. \cite{DFW1,Fu07}).
Since  $\zeta_r(K_i^l-1)=0$ for
all $i$, $\zeta_r$ induces a surjective map
$$\zeta_r:u_k(2)\twoheadrightarrow \tuk(2,r).$$
Thus we have $\End_{u_k(2)}(\Og_k^{\ot r})=\End_{u_k(2,r)}(\Og_k^{\ot r}).$

Let $s_k(2,r)$ be the subalgebra of $S_k(2,r)$ generated by $\tuk(2,r)$ and
$\zeta_r\big(\big[ {K_i;0 \atop t} \big]\big)$ for $1\leq i\leq 2$ and $t\in\mbn$. By \cite{Fu05}, $s_k(2,r)$ is isomorphic to the infinitesimal $q$-Schur algebra introduced in \cite{DNP961,Cox97,Cox00}.
The representation of little $q$-Schur algebras and infinitesimal $q$-Schur algebras has close relation. For example we have the following result.
\begin{Lem}[{\cite[7.1 and 8.5]{DFW2}}]\label{socle-indc}
Let $V$ be a $s_k(2,r)$-module. Then
$\soc_{s_k(2,r)}V=\soc_{\tuk(2,r)}V$. Furthermore $V$ is an indecomposable
$s_k(2,r)$-module if and only if $V$ is an indecomposable
$\tuk(2,r)$-module.
\end{Lem}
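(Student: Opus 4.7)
The plan is to use the fact that $\tuk(2,r)\subseteq s_k(2,r)$ differ only by the ``extra'' generators $\zeta_r\bigl(\bigl[{K_i;0\atop t}\bigr]\bigr)$, which together with the $K_i$'s generate a commutative subalgebra whose primitive idempotents are the fine weight idempotents $1_\la$ for $\la\in\La(2,r)$. The little $q$-Schur algebra only sees these idempotents through the coarser residues modulo $l$. Thus every step will come down to analysing how the finer weight decomposition on $s_k(2,r)$-modules compares with the coarser one seen by $\tuk(2,r)$.

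First I would reduce both assertions to a single structural input: every simple $s_k(2,r)$-module remains simple and non-isomorphic upon restriction to $\tuk(2,r)$, and every simple $\tuk(2,r)$-module arises this way. For $\frak{sl}_2$-type with $l$ odd this is accessible because one has explicit Steinberg-type tensor product descriptions of the simples for both algebras (parametrised by $\La^+(2,r)$), together with dimension formulas already recorded in the earlier sections of \cite{DFW2}; restriction preserves highest weights and dimensions, so simplicity transports.

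With that input the socle equality splits into two easy directions. For $\soc_{s_k(2,r)}V\subseteq \soc_{\tuk(2,r)}V$, any simple $s_k(2,r)$-submodule is still a simple $\tuk(2,r)$-submodule. For the reverse, I would take a simple $\tuk(2,r)$-submodule $N\subseteq V$, extend it by the action of the extra weight idempotents $1_\la$ to obtain $s_k(2,r)\cdot N$, and check via the bijection of simples that this submodule is again simple over $s_k(2,r)$, hence inside $\soc_{s_k(2,r)}V$. Here one uses that $1_\la$ commutes with the $\tuk(2,r)$-action on weight components, so $s_k(2,r)\cdot N$ is a sum of $1_\la N$'s which reassemble into a single simple $s_k(2,r)$-module.

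For the indecomposability statement, the easy direction follows from $\End_{s_k(2,r)}(V)\subseteq \End_{\tuk(2,r)}(V)$: if the larger endomorphism ring has only trivial idempotents, so does the smaller one, so $\tuk(2,r)$-indecomposability implies $s_k(2,r)$-indecomposability. The hard direction, and the main obstacle in the whole proof, is showing that any Krull--Schmidt decomposition of $V$ as a $\tuk(2,r)$-module is automatically one as an $s_k(2,r)$-module. I would handle this using the socle equality proved in the first part together with the fact that an indecomposable finite-dimensional module over either algebra has its socle essential, and then argue that the $s_k(2,r)$-indecomposable pieces of $V$ cannot refine further as $\tuk(2,r)$-modules because any $\tuk$-summand would give a proper subsocle, contradicting the socle coincidence. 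Once this is established the equivalence follows from Krull--Schmidt uniqueness applied to both algebras.
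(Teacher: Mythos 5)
Your two ``easy'' observations are fine (restriction of a simple $s_k(2,r)$-module to $u_k(2,r)$ is simple, which gives $\soc_{s_k(2,r)}V\subseteq\soc_{u_k(2,r)}V$, and $\End_{s_k(2,r)}(V)\subseteq\End_{u_k(2,r)}(V)$, which gives the easy indecomposability direction), but the two hard halves rest on claims that are false. First, there is no bijection of simples: simple $s_k(2,r)$-modules are indexed by $\Gamma_1^r(D)$, simple $u_k(2,r)$-modules by $\overline{\Gamma_1^r(D)}$, and $\lambda\mapsto\bar\lambda$ is not injective, so non-isomorphic simples $\widehat{L}_1(\lambda),\widehat{L}_1(\mu)$ with $\bar\lambda=\bar\mu$ become isomorphic over $u_k(2,r)$ (e.g.\ $l=3$, $r=6$, $\lambda=(3,3)$, $\mu=(6,0)$). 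This breaks your argument for $\soc_{u_k(2,r)}V\subseteq\soc_{s_k(2,r)}V$: for a simple $u_k(2,r)$-submodule $N\subseteq V$ the module $s_k(2,r)\cdot N$ need not be simple --- take $V=\widehat{L}_1(\lambda)\oplus\widehat{L}_1(\mu)$ as above and $N$ the graph of a $u_k(2,r)$-isomorphism between the summands; then $s_k(2,r)\cdot N=V$. What is actually needed is that $\soc_{u_k(2,r)}V$ is an $s_k(2,r)$-submodule and that an $s_k(2,r)$-module which is $u_k(2,r)$-semisimple is $s_k(2,r)$-semisimple; your justification (``$1_\lambda$ commutes with the $u_k(2,r)$-action on weight components, so the $1_\lambda N$ reassemble into a single simple'') is not correct, since $E,F$ shift the fine weights and $1_\lambda N$ is not a $u_k(2,r)$-submodule. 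Supplying these facts is the real content of the Jantzen-style $G_1T$ versus $G_1$ argument that the paper imports by citing [DFW2, 7.1]; your sketch does not provide it.

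Second, the hard indecomposability direction ($s_k(2,r)$-indecomposable $\Rightarrow$ $u_k(2,r)$-indecomposable) does not follow from the socle equality in the way you propose. If $V$ is $s_k(2,r)$-indecomposable but $V=A\oplus B$ over $u_k(2,r)$, then $\soc_{s_k(2,r)}V=\soc_{u_k(2,r)}V=\soc_{u_k(2,r)}A\oplus\soc_{u_k(2,r)}B$: there is no contradiction, because the socle of an indecomposable module need not be simple, and a decomposition of the common socle is perfectly compatible both with socle coincidence and with essentiality of socles. A workable route is quite different: observe that $\End_{u_k(2,r)}(V)$ is graded by the torsion-free group $l\mathbb{Z}^2$ (coming from the fine weight idempotents $1_\lambda$) with degree-zero component $\End_{s_k(2,r)}(V)$, and then use a Gordon--Green type argument that a finite-dimensional algebra graded by a torsion-free abelian group whose degree-zero part is local contains no nontrivial idempotents. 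That (or an equivalent weight-space lifting of idempotents) is what [DFW2, 8.5] provides; since the paper itself gives no proof beyond the citation, these are precisely the two arguments your proposal would still have to supply.
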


\section{The basic algebra of $u_k'(2)$}
Let $u_k'(2)$ be the subalgebra $u_k(2)$ generated by $E$, $F$, $K^{\pm 1}$. Then $u_k'(2)$ is the infinitesimal quantum group of $\frak{sl}_2$. In this section, we will investigate the basic algebra of $u_k'(2)$ using the result of \cite{CP}. This result will be used in \S4.

%The little $q$-Schur algebra $u_k(2,r)$ is also the homomorphic image of $u_k'(2)$. That is, we have the following result.
%\begin{Lem}\label{image of sl2}
%For any $r$, we have $u_k(2,r)=\zr(u_k'(2))$.
%\end{Lem}
%\begin{proof}
%Let $\ttk_i=\zeta_r(K_i)$ and $\ttk=\ttk_1\ttk_2^{-1}=\zeta_r(K)$ for $i=1,2$. Since $\ttk_i^{l}=1$, it is enough to prove  $\ttk_i\in \zr(u_k'(2))$ for $i=1,2$. Since
%$\ttk_1\ttk_2=\ep^r$, we have
%$\ttk_1^2=\ep^r\ttk\in \zr(u_k'(2)).$
%Since  $(2,l)=1$, there exist integers $a,b$ such that $2a+bl=1$.
%It follows that $\ttk_1=\ttk_1^{2a+bl}=\ttk_1^{2a}\in \zr(u_k'(2))$ and hence $\ttk_2=\ttk_1\ttk^{-1}=\ttk_1\ttk^{l-1}\in\zr(u_k'(2))$. The proof is completed.
%\end{proof}
By \cite[9.4]{DFW2}, we have
\begin{equation}\label{image of sl2}
u_k(2,r)=\zr(u_k'(2))
\end{equation}
 for any $r$. Thus we have
$$ \End_{u_k(2)}(\Ogk^{\otimes r})
=\End_{u_k(2,r)}(\Ogk^{\otimes r})
= \End_{u_k'(2)}(\Ogk^{\otimes r}).$$
Now following \cite{CP}, we introduce several $u_k'(2)$-modules as follows. For $1\leq j\leq l$, let $L_j$ be the $u_k'(2)$-module with basis $v_0,v_1,\cdots,v_{j-1}$
such that the action of $u_k'(2)$ is given by
\begin{equation*}
Kv_i =\ep^{j-1-2i}v_i,\quad
Ev_i =[j-i]v_{i-1},\quad
Fv_i =[i+1]v_{i+1},
\end{equation*}
where $v_{-1}=v_j=0$. By \cite[3.2]{CP}, we know that the $u_k'(2)$-modules  $L_j$ $(1\leq j\leq l)$ are all non-isomorphic irreducible $u_k'(2)$-modules. For $1\leq j\leq l-1$, let $P_j$ be the $u_k'(2)$-module with basis $v_0^{(j)},v_1^{(j)},\cdots,v_{l-1}^{(j)},w_0^{(j)},w_{1}^{(j)},\cdots,w_{l-1}^{(j)}$ such that the action of $u_k'(2)$ is given by
\begin{equation*}
\begin{split}
&\qquad\qquad\qquad
Kv_s^{(j)} =\ep^{l-j-2s-1}v_s^{(j)},\quad
Ev_s^{(j)}=[l-j-s]v_{s-1}^{(j)},\quad
Fv_s^{(j)}=[s+1]v_{s+1}^{(j)},\\
&Kw_s^{(j)} =\ep^{j-2s-1}w_s^{(j)},\quad
Ew_s^{(j)}=[j-s]w_{s-1}^{(j)}+\bigg[{l-j-1+s\atop s}\bigg]v_{s-j-1+l}^{(j)},\quad
Fw_s^{(j)} =[s+1]w_{s+1}^{(j)},
\end{split}
\end{equation*}
where $v_s=w_s=0$ for $s\not\in[0,l-1]$. By \cite[3.9]{CP}, the module $P_j$ is the projective cover of $L_j$ for $1\leq j\leq l-1$.
For $1\leq j\leq l-1$, let
\begin{equation*}
\begin{split}
V_j&=\spann_k\{v_{l-j}^{(j)},\cdots,v_{l-1}^{(j)}\},\\
M_j&=\spann_k\{v_{0}^{(j)},\cdots,v_{l-1}^{(j)}\},\\
N_j&=\spann_k\{v_{l-j}^{(j)},\cdots,v_{l-1}^{(j)},
w_j^{(j)},\cdots,w_{l-1}^{(j)}\}.
\end{split}
\end{equation*}
It is easy to check the following result.
\begin{Lem}\label{Pj}
For $1\leq j\leq l-1$, $V_j$, $M_j$ and $N_j$ are all submodules of $P_j$. Furthermore we have  $V_j\cong L_j$, $M_j+N_j=\rad(P_j)$, $M_j/L_j\cong N_j/L_j\cong L_{l-j}$. Thus
$P_j$ has the following structure:
\begin{eqnarray*}
P_j: \quad & L_j      \\
    &2 L_{l-j}      \\
  &L_j
\end{eqnarray*}

\end{Lem}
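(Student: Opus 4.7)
The plan is to verify everything by direct computation with the explicit action of $E$, $F$, $K^{\pm 1}$ on the distinguished basis of $P_j$ given just above the statement; the only subtleties live at the ends of the weight strings and in the mixed $v$-term of $Ew_s^{(j)}$. First I would check that $V_j$, $M_j$, $N_j$ are stable under the generators. The $K$-action is diagonal on the chosen basis and the $F$-action is the upper shift $[s+1]v_{s+1}^{(j)}$ (respectively $[s+1]w_{s+1}^{(j)}$) which vanishes at $s=l-1$, so only $E$-stability needs checking. In $V_j$ the bottom vector $v_{l-j}^{(j)}$ is killed because $[l-j-s]=0$ at $s=l-j$; in $N_j$ the $v$-term appearing in $Ew_s^{(j)}$ has index $s-j-1+l$, which equals $l-1$ when $s=j$ (so that term lies in $V_j\subseteq N_j$) and is $\geq l$ when $s>j$ (so the term vanishes by the convention $v_s^{(j)}=0$ for $s\notin[0,l-1]$). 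Stability of $M_j$ is immediate from the $v$-formulas alone.

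Next I would identify the composition factors by rescaling each basis so as to match the defining action on $L_j$ or $L_{l-j}$. For $V_j\cong L_j$, set $v_{l-j+i}^{(j)}\mapsto\la_i v_i$ and solve the recursion $\la_i[j-i]=-[i]\la_{i-1}$; this is solvable over $k$ because $[j-i]\ne 0$ for $1\leq i\leq j-1$ (as $\ep$ is a primitive $l$th root of unity), and the $K$-weights agree modulo $\ep^l=1$. Entirely analogous rescalings give $M_j/V_j\cong L_{l-j}$ via $\bar v_s^{(j)}\mapsto v_s$ and $N_j/V_j\cong L_{l-j}$ via $\bar w_{j+t}^{(j)}\mapsto\mu_t v_t$, the key point for the latter being that the $v$-contribution to $Ew_{j+t}^{(j)}$ lies in $V_j$ when $t=0$ and vanishes when $t>0$, hence is killed in the quotient.

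Finally, to pin down $M_j+N_j=\rad(P_j)$ I would compute $P_j/(M_j+N_j)$, which is spanned by the images of $w_0^{(j)},\dots,w_{j-1}^{(j)}$. The $v$-piece of $Ew_s^{(j)}$ disappears in this quotient (all $v$'s lie in $M_j$), and $F\bar w_{j-1}^{(j)}=0$ because $[j]w_j^{(j)}\in N_j$, so the same rescaling recipe produces an isomorphism onto $L_j$. Since $P_j$ is the projective cover of $L_j$ by \cite[3.9]{CP}, its radical has codimension exactly $\dim L_j=j$; the codimension-$j$ submodule $M_j+N_j$ must therefore coincide with $\rad(P_j)$. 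The intersection $M_j\cap N_j=V_j$ is immediate from the defining spans, whence $(M_j+N_j)/V_j\cong M_j/V_j\oplus N_j/V_j\cong 2L_{l-j}$ and the claimed Loewy layers follow. The only place requiring care is the bookkeeping in the recursions above, which reduces to non-vanishing of $[m]$ for $1\leq m\leq l-1$; I do not anticipate any deeper obstacle.
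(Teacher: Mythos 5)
Your proposal is correct and takes essentially the same route as the paper, which states the lemma as an easy direct check of the explicit formulas: your verification (stability of $V_j$, $M_j$, $N_j$ under $E$, $F$, $K^{\pm1}$ via the boundary vanishings, rescaling recursions to identify $V_j\cong L_j$ and $M_j/V_j\cong N_j/V_j\cong L_{l-j}$, and identifying $M_j+N_j$ with $\rad(P_j)$ from the simple quotient $P_j/(M_j+N_j)\cong L_j$ together with the projective-cover property) is exactly that check carried out. The only step left implicit is that in $Ew_j^{(j)}$ the coefficient $[j-j]=0$ kills the $w_{j-1}^{(j)}$-term (needed since $w_{j-1}^{(j)}\notin N_j$), a boundary observation of the same trivial kind you already make for $V_j$.
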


For $1\leq j\leq l-1$, let $\al_1^{(j)}:P_j\ra P_{l-j}$ be the linear map such that
\begin{equation*}
\al_1^{(j)}(M_j)=0,\ \al_1^{(j)}(w_s^{(j)})=v_s^{(l-j)}
\end{equation*}
for $0\leq s\leq l-1$,
and let $\al_2^{(j)}:P_j\ra P_{l-j}$ be the linear map such that
\begin{equation*}
\al_2^{(j)}(N_j)=0,\ \al_2^{(j)}(v_s^{(j)}) =a_s^{(j)}v_{s+j}^{(l-j)},\ \al_2^{(j)}(w_t^{(j)})=b_t^{(j)}w_{t+l-j}^{(l-j)}
\end{equation*}
for $0\leq s\leq l-j-1$ and $0\leq t\leq j-1$,
where $a_s^{(j)}=[j]\big[{s+j\atop s}\big]$ and $b_t^{(j)}=[l-j]\big[{l+t-j\atop t}\big]$.
One can easily check the following result.
\begin{Lem}\label{basis}
For $1\leq j\leq l-1$, we have $\al_1^{(j)},\al_2^{(j)}\in\Hom_{u_k'(2)}(P_j,P_{l-j})$ and the set $\{\al_1^{(j)},\al_2^{(j)}\}$ forms a $k$-basis for $\Hom_{u_k'(2)}(P_j,P_{l-j})$.
\end{Lem}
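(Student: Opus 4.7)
The plan is to establish Lemma~\ref{basis} in three steps: check that $\al_1^{(j)}$ and $\al_2^{(j)}$ respect the $u_k'(2)$-action, compute $\dim_k\Hom_{u_k'(2)}(P_j,P_{l-j})$ using the projectivity of $P_j$, and finally verify linear independence of the two maps.

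For the homomorphism check, the strategy is to test $\al_i^{(j)}$ on the generators $E$, $F$, $K^{\pm 1}$ applied to each of the basis vectors $v_s^{(j)}$, $w_s^{(j)}$ of $P_j$. For $\al_1^{(j)}$, since $M_j$ is already a submodule by Lemma~\ref{Pj}, equivariance on $M_j$ is automatic; on the $w$-vectors, the key identity is that $K$ acts on $w_s^{(j)}$ by $\ep^{j-2s-1}$ and on $v_s^{(l-j)}$ by $\ep^{l-(l-j)-2s-1}=\ep^{j-2s-1}$, while the extra term $\bigl[{l-j-1+s\atop s}\bigr]v_{s-j-1+l}^{(j)}$ appearing in $Ew_s^{(j)}$ lies in $M_j$ and is killed by $\al_1^{(j)}$. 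For $\al_2^{(j)}$, similarly $N_j$ is a submodule so the map is well-defined, and the scalars $a_s^{(j)}$ and $b_t^{(j)}$ are precisely the factors forced by the $F$-recursion on the $v$- and $w$-chains; the $E$-recursions then match after using $\ep^l=1$ to rewrite $[l-j-s]=-[s+j]$.

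Second, since $P_j$ is the projective cover of $L_j$ (Lemma~\ref{Pj} together with \cite[3.9]{CP}) and each $L_i$ is absolutely simple with $\End_{u_k'(2)}(L_i)=k$, I would invoke the standard identity
\[
\dim_k\Hom_{u_k'(2)}(P_j,V)=[V:L_j]
\]
for any finite-dimensional $u_k'(2)$-module $V$. Applying Lemma~\ref{Pj} with $j$ replaced by $l-j$, the composition factors of $P_{l-j}$ are $L_{l-j},\,2L_j,\,L_{l-j}$, so $[P_{l-j}:L_j]=2$ and hence $\dim_k\Hom_{u_k'(2)}(P_j,P_{l-j})=2$.

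Finally, linear independence of $\al_1^{(j)}$ and $\al_2^{(j)}$ is immediate. Suppose $c_1\al_1^{(j)}+c_2\al_2^{(j)}=0$. Evaluating at $v_0^{(j)}\in M_j$ kills the first map and yields $c_2[j]\,v_j^{(l-j)}=0$; since $1\le j\le l-1$ forces $[j]\ne 0$, we get $c_2=0$. Evaluating then at $w_0^{(j)}$ (which is \emph{not} in $N_j$) yields $c_1v_0^{(l-j)}=0$, so $c_1=0$. Combined with the dimension count, this proves the lemma. The main (though still mild) obstacle is the $E$-equivariance verification for $\al_2^{(j)}$, which requires the scalar identity $[l-j-s]=-[s+j]$ coming from $\ep^l=1$; everything else is bookkeeping against the explicit module structure in Lemma~\ref{Pj}.
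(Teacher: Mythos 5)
Your argument is correct and is essentially the verification the paper leaves to the reader (its proof is just ``one can easily check''): direct equivariance checks for $\al_1^{(j)},\al_2^{(j)}$, the count $\dim_k\Hom_{u_k'(2)}(P_j,P_{l-j})=[P_{l-j}:L_j]=2$ from projectivity of $P_j$ and $\End(L_j)=k$, and the evaluation at $v_0^{(j)}$, $w_0^{(j)}$ for independence. The one spot your sketch glosses over is the $t=0$ case of $E$-equivariance for $\al_2^{(j)}$ on the $w$-chain, where the extra term of $Ew_0^{(j)}$ is $v_{l-j-1}^{(j)}\notin N_j$ and is therefore not killed; there one must also check the scalar identity $[j]\bigl[{l-1\atop l-j-1}\bigr]=[l-j]\bigl[{l-1\atop l-j}\bigr]$, which indeed holds, so the proof goes through.
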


Let $\mathscr Q$ be the quiver in Figure 1
\begin{figure}[h]
\begin{center}
\setlength{\unitlength}{.5cm}
\begin{picture}(18,3)
 \put(6.9,1.9){$\bullet$} \put(6.4,1.2){$X_0$}
\put(10.9,1.9){$\bullet$} \put(10.7,1.2){$X_1$}
  \put(7.4,2.2){\vector(1,0){3.2}}
   \put(7.4,2.9){\vector(1,0){3.2}}
\put(9,3.1){$\alpha_{1}$}  \put(9,2.4){$\alpha_{2}$}
\put(10.6,1.8){\vector(-1,0){3.2}} \put(9,1.1){$\beta_{1}$}
\put(10.6,0.9){\vector(-1,0){3.2}} \put(9,0.2){$\beta_{2}$}
\end{picture}
\caption{}
\end{center}
\end{figure}
and $\mathscr I$ be the two sided ideal of the path algebra $k\mathscr Q$ generated by the relations
$\al_1\bt_2=\al_2\bt_1=0$,  $\al_1\bt_1=\al_2\bt_2$,  $\bt_1\al_2=\bt_2\al_1=0$,  $\bt_1\al_1=\bt_2\al_2$.
Let $\mathscr A=k\mathscr Q/\mathscr I$,

\begin{Lem}\label{basic algebra of sl2}
For $1\leq j\leq l-1$, let $B_j$ be the basic algebra of the block of $u_k'(2)$ containing $L_j$. Then $B_j$ has two simple modules $L_j,L_{l-j}$ and $B_j\cong\mathscr A$.
\end{Lem}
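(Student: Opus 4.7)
The plan is to realize $B_j$ as the endomorphism algebra of the basic projective generator of the block containing $L_j$, count dimensions on both sides, and match the generators of $\mathscr A$ with explicit homomorphisms between the PIMs.

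By Lemma \ref{Pj} (applied to both $j$ and $l-j$), the only composition factors of $P_j$ and $P_{l-j}$ are $L_j$ and $L_{l-j}$, so these simples lie in a single block with basic projective generator $P_j\oplus P_{l-j}$; hence $B_j\cong\End_{u_k'(2)}(P_j\oplus P_{l-j})^{op}$. Lemma \ref{Pj} yields $\dim\End(P_j)=\dim\End(P_{l-j})=2$ (the multiplicity of the top as a composition factor), and Lemma \ref{basis} together with its $j\leftrightarrow l-j$ counterpart gives $\dim\Hom(P_j,P_{l-j})=\dim\Hom(P_{l-j},P_j)=2$, so $\dim B_j=8$. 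On the quiver side, the relations in $\mathscr I$ force every length-$3$ path to vanish (e.g.\ $\al_1\bt_1\al_1=\al_2\bt_2\al_1=0$ since $\bt_2\al_1=0$), leaving the basis $\{e_0,e_1,\al_1,\al_2,\bt_1,\bt_2,\al_1\bt_1,\bt_1\al_1\}$, so $\dim\mathscr A=8$ as well.

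I would then define $\phi:\mathscr A\to B_j$ by $e_0\mapsto\id_{P_j}$, $e_1\mapsto\id_{P_{l-j}}$, $\al_i\mapsto\al_i^{(j)}$, $\bt_1\mapsto\al_2^{(l-j)}$, $\bt_2\mapsto\al_1^{(l-j)}$; the index ``twist'' on the $\bt$'s is forced by the commutation relations below. The vanishing relations $\al_1\bt_2=0=\al_2\bt_1$ translate into the compositions $\al_1^{(l-j)}\circ\al_1^{(j)}=0$ and $\al_2^{(l-j)}\circ\al_2^{(j)}=0$, both immediate from the definitions: $\al_1^{(j)}$ lands in $M_{l-j}$, which is killed by $\al_1^{(l-j)}$, while the image of $\al_2^{(j)}$ has indices placing it in $N_{l-j}$, which is killed by $\al_2^{(l-j)}$. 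The key commutation $\al_1\bt_1=\al_2\bt_2$ becomes the equality $\al_2^{(l-j)}\circ\al_1^{(j)}=\al_1^{(l-j)}\circ\al_2^{(j)}$; both composites kill $M_j$ and send $w_t^{(j)}$, for $0\leq t\leq j-1$, to the same scalar multiple of $v_{t+l-j}^{(j)}$, via the identity $a_t^{(l-j)}=b_t^{(j)}=[l-j]\big[{t+l-j\atop t}\big]$. The mirror relation at $X_1$ follows symmetrically from $a_s^{(j)}=b_s^{(l-j)}$.

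Surjectivity of $\phi$ is clear, since the images of the eight basis elements are linearly independent: the two length-$2$ images are nonzero endomorphisms of $P_j$ and $P_{l-j}$ whose images lie in $V_j\cong L_j$ and $V_{l-j}\cong L_{l-j}$ respectively, so they are not in the span of the idempotents. Equality of dimensions then upgrades surjectivity to an isomorphism, and the two simple $B_j$-modules are automatically the two simples $L_j,L_{l-j}$ of the block. The main obstacle is the careful bookkeeping of index ranges in the piecewise definitions of $\al_1^{(\cdot)}$ and $\al_2^{(\cdot)}$: one must verify that the ``off-diagonal'' composites land in the killing submodules, and that the two ``diagonal'' composites produce equal scalars, which ultimately reduces to the $j\leftrightarrow l-j$ symmetry of the quantum binomial coefficients $a$ and $b$.
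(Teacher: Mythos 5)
Your proposal is correct and follows essentially the same route as the paper: identify $B_j$ with $\End_{u_k'(2)}(P_j\oplus P_{l-j})$, define the homomorphism $\mathscr A\to B_j$ by $\al_i\mapsto\al_i^{(j)}$, $\bt_1\mapsto\al_2^{(l-j)}$, $\bt_2\mapsto\al_1^{(l-j)}$, use Lemma \ref{basis} to control the Hom-spaces, and conclude by the dimension count $\dim\mathscr A=\dim B_j=8$. You merely spell out the relation checks (via $a_t^{(l-j)}=b_t^{(j)}$ and the vanishing of the off-diagonal composites) that the paper leaves as ``easy to see.''
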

\begin{proof}
Since $P_j$ is the projective cover of $L_j$ we have $B_j=\End_{u_k'(2)}(P_j\oplus P_{l-j})$. It is easy to see that there is an algebra homomorphism $f:\mathscr A\ra B_j$ such that $f(\al_1)=\al_1^{(j)}$, $f(\al_2)=\al_2^{(j)}$, $f(\bt_1)=\al_2^{(l-j)}$ and $f(\bt_2)=\al_1^{(l-j)}$. By \ref{basis}, we conclude that $f$ is surjective. Thus, since $\dim_k(\mathscr A)=\dim_k B_j=8$, $f$ must be an algebra isomorphism.
\end{proof}

\section{The semisimple block of $s_k(2,r)$ and $\tuk(2,r)$}
In this section, we will first classify semisimple blocks for $s_k(2,r)$ and $\tuk(2,r)$. Certain relation between indecomposable projective modules for $s_k(2,r)$ and $\tuk(2,r)$ will be given in \ref{restriction of projective module}. This result can be used to determine the basic algebra of non-semisimple blocks for $\tuk(2,r)$ in certain cases (see \ref{basic algebra}).

Following \cite[Section 3.1 and 3.2]{Donbk}, let $G_1$ be the first
Frobenius kernel of quantum linear group  and $G_1T$ be the corresponding Jantzen
subgroups. By
\cite[3.1(13)(i)]{Donbk}, for each $\la\in\mbz^2$, there is a
simple object $L_1(\la)$ in Mod($G_1$) and a simple object $\h
L_1(\la)$ in Mod$(G_1T)$. Note that if $\lb=\bar\mu$, then $  L_1(\la)\cong  L_1(\mu)$.  Here $\ba\ :\mbz^2\ra(\mathbb Z/l\mathbb Z)^2$ is the map defined by
$\ol{(j_1,j_2)}=(\ol{j_1},\ol{j_2}).$
Thus we may denote $L_1(\la)$ by $L_1(\lb)$.

Let  $P(D)=\mathbb{N}^2$  and $P_1(D)=\{\lambda\in \mbn^2|0\leqslant \lambda_1-\lambda_{2}\leqslant l-1, 0\leqslant \lambda_2\leqslant l-1\}$. Then by \cite{Cox97,Cox00}, the set $\{\widehat{L}_1(\lambda)|\lambda\in \Gamma_1^r(D)\}$ forms a complete set of non-isomorphic simple $s_k(2,r)$-modules, where  $\Gamma_1^r(D)=\{\lambda\in P_1(D)+lP(D)|\la_1+\la_2=r\}$. Furthermore, by \cite[5.5]{DFW2}, the set \{$L_1(\lb)|\bar{\lambda}\in \overline{\Gamma_1^r(D)}$\} forms a complete set of non-isomorphic simple $\tuk(2,r)$-modules.

We will  denote the
block of infinitesimal $q$-Schur algebras $s_k(2,r)$ containing
$\hl_1(\la)$ by $\hsB_1^r(\la)$ for $\la\in\Ga_1^r(D)$. For
$\bar\la\in\ol{\Ga_1^r(D)}$, the block of little $q$-Schur algebras $\tuk(2,r)$
containing $L_1(\lb)$ will be denoted by $\sB_1^r(\bar\la)$.

For $\la\in\mbz^2$, we write $\la_1-\la_2+1=ml+s$, where $0\leq
s<l$. Let $s^-\cdot\la=(\la_2-1+ml,\la_1+1-ml)$. Similarly, let
$s^+\cdot\la=\la$ if $s=0$, otherwise let
$s^+\cdot\la=(\la_2-1+(m+1)l,\la_1+1-(m+1)l)$. Define $s^0\la=\la$,
$s^m\cdot\la=s^+\cdot(s^{m-1}\cdot\la)$ for $m>0$, and
$s^m\cdot\la=s^-\cdot(s^{m+1}\cdot\la)$ for $m<0$.

To study the block for $\tuk(2,r)$, we need to understand the block for $s_k(2,r)$.
Let $$\text{St}=\{\la\in\Ga_1^r(D)\mid \la_1-\la_2+1=ml\text{ for some $m\in\mbz$}\}.$$  An element in $\text{St}$ will be called a Steinberg weight. For $\la\in\St$, the block $\h \sB_1^r(\la)$ of $s_k(2,r)$ will be called a Steinberg block. Note that $\h \sB_1^r(\la)$ is semisimple for $\la\in\text{St}$. The set $\text{St}$ can be explicitly  calculated as follows.

\begin{Lem}\label{St}
Assume $r=sl+t$, where $s\geq 0$ and $0\leq t<l$.

(1) If $s\geq 1$ and $t$ is odd, then
$$\St=\bigg\{\bigg(l+\frac{t-1}{2},\frac{t+1}{2}\bigg)+
l\bigg(\frac{s+m-2}{2},\frac{s-m}{2}\bigg)
\,\bigg| m=s,s-2,\cdots,-s+2\bigg\}.$$

(2) If $s\geq 1$, $t$ is even and $t\not=l-1$, then
$$\St=\bigg\{\bigg(\frac{3l-1+t}{2},\frac{t+l+1}{2}\bigg)+
l\bigg(\frac{s+m-3}{2},\frac{s-m-1}{2}\bigg)
\,\bigg| m=s-1,s-3,\cdots,-s+3\bigg\}.$$

(3) If $s\geq 1$ and $t=l-1$, then
$$\St=\bigg\{(l-1,0)+
l\bigg(\frac{s+m-1}{2},\frac{s-m+1}{2}\bigg)
\,\bigg| m=s+1,s-1,\cdots,-s+1\bigg\}.$$

(4) If $s=0$, then
$$\St=
\begin{cases}
(l-1,0)&\text{if $r=l-1$},\\
\emptyset&\text{otherwise}.
\end{cases}$$
\end{Lem}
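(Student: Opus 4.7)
The plan is to unwind the definitions directly. Every $\la\in\Ga_1^r(D)$ has a unique decomposition $\la=\mu+l\nu$ with $\mu\in P_1(D)$ and $\nu\in\mbn^2$. The Steinberg condition $\la_1-\la_2+1\in l\mbz$ translates into $(\mu_1-\mu_2)+1\equiv 0\pmod l$, and since $1\leq(\mu_1-\mu_2)+1\leq l$ this forces $\mu_1-\mu_2=l-1$. Writing $\mu=(\mu_2+l-1,\mu_2)$ with $0\leq\mu_2\leq l-1$ and imposing $\la_1+\la_2=sl+t$ reduces the whole problem to the single equation
\begin{equation*}
2\mu_2=kl+(t+1),\qquad k:=s-1-(\nu_1+\nu_2),\qquad \nu_1,\nu_2\in\mbn,\ 0\leq\mu_2\leq l-1,
\end{equation*}
so what remains is to enumerate, for each residue of $t$, the admissible $k$'s.

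The second step is a short case analysis. Integrality of $\mu_2$ together with $l$ odd forces $k$ and $t$ to have opposite parity, and the inequalities $-(t+1)\leq kl\leq 2l-3-t$ pin $k$ down to a very small range. Explicitly: when $t$ is odd the only solution is $k=0$, giving $\mu_2=(t+1)/2$, $\mu_1=l+(t-1)/2$ and $\nu_1+\nu_2=s-1$; when $t$ is even with $t<l-1$ the only solution is $k=1$, giving $\mu_2=(l+t+1)/2$, $\mu_1=(3l-1+t)/2$ and $\nu_1+\nu_2=s-2$; when $t=l-1$ the equation collapses to $2\mu_2=(k+1)l$, whose only admissible integral solution is $k=-1$, giving $\mu_2=0$, $\mu_1=l-1$ and $\nu_1+\nu_2=s$. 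The $s=0$ case is a two-line verification: $\nu=0$ is forced by $\la_1+\la_2<l$, after which only $t=l-1$ survives and yields the single weight $(l-1,0)$.

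The third step is purely cosmetic. Having fixed $\mu$ and the value $N:=\nu_1+\nu_2$, I parametrize by $m:=\nu_1-\nu_2$, so $\nu_1=(N+m)/2$ and $\nu_2=(N-m)/2$. The conditions $\nu_1,\nu_2\in\mbn$ translate exactly into $|m|\leq N$ with $m\equiv N\pmod 2$, and feeding in $N=s-1$, $s-2$, $s$ recovers the three tabulated ranges $m=s,s-2,\dots,-s+2$, then $m=s-1,s-3,\dots,-s+3$, then $m=s+1,s-1,\dots,-s+1$, together with the base points written in (1), (2), (3).

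The main obstacle is nothing conceptual but the bookkeeping: the parities of $k$ and $m$ must be tracked correctly in each regime, and the edge case $t=l-1$ must be singled out because the coincidence $t+1=l$ is what lets $k=-1$ be admissible and produces the extra weight relative to case (2) (the numbers of Steinberg weights are $s$, $s-1$, $s+1$, respectively). Once these are in hand, everything is routine modular arithmetic.
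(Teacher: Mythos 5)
Your argument is correct and is essentially the paper's own proof in slightly different packaging: the paper likewise solves the two linear conditions $\la_1-\la_2+1\in l\mbz$, $\la_1+\la_2=sl+t$ and then checks membership in $P_1(D)+lP(D)$ (writing out only case (1) and leaving (2)--(4) as ``similar''), whereas you first fix the $P_1(D)$-component via $\mu_1-\mu_2=l-1$ and then enumerate the admissible $k$ -- the same elementary computation. One cosmetic caveat: the index $m$ in the statement equals your $\nu_1-\nu_2+1$, so your ranges are shifted by one from the tabulated ones, but since both parametrize exactly $\{\nu\in\mbn^2\mid \nu_1+\nu_2=N\}$ for the appropriate $N$, the resulting sets of Steinberg weights coincide and nothing is affected.
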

\begin{proof}
We only prove (1). The other cases are proved similarly.
We assume $t$ is odd. Clearly, we have  $\big(l+\frac{t-1}{2},\frac{t+1}{2}\big)+
l\big(\frac{s+m-2}{2},\frac{s-m}{2}\big)\in\St$ for $m=s,s-2,\cdots,-s+2$. If $\la\in\St$ then $\la_1-\la_2+1=lm$ for some $m\in\mbz$. Hence $\la_1=\frac{l(s+m)+t-1}{2}$ and $\la_2=\frac{l(s-m)+t+1}{2}$.
We write $$\la=\bigg(l+\frac{t-1}{2},\frac{t+1}{2}\bigg)+
l\bigg(\frac{s+m-2}{2},\frac{s-m}{2}\bigg).$$
Since $\big(l+\frac{t-1}{2},\frac{t+1}{2}\big)\in P_1(D)$ and $\la\in\Ga_1^r(D)$ we conclude that $\big(\frac{s+m-2}{2},\frac{s-m}{2}\big)\in\mbn^2$ and hence
$-s+2\leq m\leq s$ and $m-s$ is even. The assertion (1) follows.
\end{proof}

The following result follows from \cite[3.4]{EF} (see also \cite[2.3]{DNP962}).
\begin{Lem}\label{block1}
For $\la\in\Ga_1^r(D)$ we have
$$\h \sB_1^r(\la)=\{s^m\cdot\la\mid m\in\mbz,\,s^m\cdot\la\in\Ga_1^r(D)\}.$$
\end{Lem}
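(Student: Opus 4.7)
The plan is to apply the linkage principle for infinitesimal $q$-Schur algebras from \cite[3.4]{EF}, also formulated in \cite[2.3]{DNP962}: two simple $s_k(2,r)$-modules $\hl_1(\mu),\hl_1(\nu)$ lie in the same block if and only if $\mu$ and $\nu$ are in the same orbit of the dot action of the affine Weyl group of type $A_1$ on $\Ga_1^r(D)$. Once this is granted, the task reduces to showing that the subset of $\mbz^2$ swept out by iterating the maps $s^+$ and $s^-$ in the statement is exactly the affine Weyl group orbit of $\la$.

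First I would recall that the affine Weyl group attached to $\mathfrak{sl}_2$ at an $l$th root of unity is the infinite dihedral group generated by the affine reflections across the walls $\{\mu\in\mbq^2\mid\mu_1-\mu_2+1=ml\}$, $m\in\mbz$. Each such reflection fixes the total weight $\mu_1+\mu_2$ (a necessary feature, since a block sits inside $\Ga_1^r(D)$ on which this sum is the constant $r$) and reverses the sign of $\mu_1-\mu_2+1-ml$.

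Next I would check by direct computation that this is exactly what $s^+$ and $s^-$ encode. Writing $\la_1-\la_2+1=ml+s$ with $0\leq s<l$, we have $s^-\cdot\la=(\la_2-1+ml,\la_1+1-ml)$; a one-line verification gives $(s^-\cdot\la)_1+(s^-\cdot\la)_2=\la_1+\la_2$ and $(s^-\cdot\la)_1-(s^-\cdot\la)_2+1=ml-s$, so $s^-$ is the reflection across the wall at level $ml$. The same check with $m$ replaced by $m+1$ shows that $s^+$ is the reflection across the adjacent wall $(m+1)l$. Their products therefore generate the entire dihedral action, so the orbit of $\la$ equals $\{s^m\cdot\la\mid m\in\mbz\}$. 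Intersecting with $\Ga_1^r(D)$, which indexes the simples of $s_k(2,r)$ and hence exhausts the block, gives the stated equality.

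The main obstacle is nothing deep: it is the purely notational verification that the piecewise formulas defining $s^\pm$ coincide with the standard reflection generators of the rank-one affine Weyl group, together with checking that the general linkage theorem of \cite[3.4]{EF} specialises cleanly to $s_k(2,r)$ in the $\mathfrak{gl}_2$ case. Both are routine once the framework is in place, which is why the published proof reduces to citing the references.
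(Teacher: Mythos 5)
Your verification that $s^{\pm}$ preserve $\la_1+\la_2$ and act as the reflections across the walls $\{\mu\mid\mu_1-\mu_2+1=ml\}$ is fine for regular weights, and for those weights your route (quote the linkage result of \cite[3.4]{EF}/\cite[2.3]{DNP962} and match orbits with the $s^m\cdot\la$-chain) is essentially what the paper does, since the paper's proof is nothing more than the citation. But there is a genuine gap at the singular (Steinberg) weights, i.e.\ those $\la$ with $l\mid\la_1-\la_2+1$, which are included in the statement. For such $\la$ the paper's definitions give $s^+\cdot\la=s^-\cdot\la=\la$, so the right-hand side of the lemma is the singleton $\{\la\}$; by contrast the full dot-orbit of $\la$ under the infinite dihedral group is not a singleton, and its intersection with $\Ga_1^r(D)$ typically contains several Steinberg weights. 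Concretely, for $r=2l-1$ the weights $(2l-1,0)$ and $(l-1,l)$ both lie in $\Ga_1^r(D)$, both lie on walls, and are exchanged by the reflection across the wall at level $l$, yet they lie in distinct (singleton) blocks -- a block that were semisimple with two non-isomorphic simples would decompose further, and in fact the corresponding Steinberg-type simples are projective and injective over $G_1T$, so each is a block on its own. Hence your two key assertions -- ``same block iff same affine Weyl dot-orbit'' and ``the orbit of $\la$ equals $\{s^m\cdot\la\mid m\in\mbz\}$'' -- are both false precisely at these weights, and taken together they would prove a statement that contradicts the lemma (and contradicts the paper's later use of the fact that each $\la\in\St$ gives a semisimple block by itself, cf.\ Lemma 4.2 and Proposition 4.5).

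So the proposal needs the regular/singular dichotomy made explicit: for $\la$ off the walls your reflection computation shows the $s^m\cdot\la$ indeed enumerate the dot-orbit and the cited linkage gives the block; for $\la\in\St$ one must argue separately that $\h L_1(\la)$ is projective-injective (Steinberg tensored with an $l$-twist), so that $\h\sB_1^r(\la)=\{\la\}$, which is what the collapsed chain $\{s^m\cdot\la\}=\{\la\}$ encodes. The references you and the paper cite already state the block description in this case-distinguished form; your paraphrase of them as a uniform ``blocks are orbits'' principle is where the argument breaks.
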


By  \ref{block1}, we immediately get the following result.
\begin{Coro}\label{block2}
Assume $\la\not\in\St$ and  $\la\in\Ga_1^r(D)$. Write
$\la=\mu+l\nu$, where $\mu\in P_1(D)$ and $\nu\in P(D)$.

(1) If $\mu_1\leq l-2$, then
$\h\sB_1^r(\la)=\{s^m\cdot\la\mid-2\nu_1\leq m\leq 2\nu_2\}$.

(2) If $\mu_1\geq l-1$, then
$\h\sB_1^r(\la)=\{s^m\cdot\la\mid-2\nu_1-1\leq m\leq 2\nu_2+1\}$.
\end{Coro}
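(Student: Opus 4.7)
The plan is to invoke Lemma \ref{block1}, which identifies $\h\sB_1^r(\la)$ with the set of $s^m\cdot\la$ lying in $\Ga_1^r(D)$, and then to determine the exact integers $m$ for which this membership holds. Since $\la\notin\St$, we have $s:=\mu_1-\mu_2+1\in\{1,\ldots,l-1\}$ and $t:=l-s$; a direct check of the definition shows that $s^-$ shifts $(\la_1,\la_2)$ by $(-s,s)$ and $s^+$ shifts by $(t,-t)$, with the defining $s$-value alternating between $s$ and $t$ under each successive application. Iterating and using $\la=\mu+l\nu$, one obtains
\begin{align*}
s^{-2k}\cdot\la &= (\mu_1+l(\nu_1-k),\,\mu_2+l(\nu_2+k)),\\
s^{-(2k+1)}\cdot\la &= (\mu_2-1+l(\nu_1-k),\,\mu_1+1+l(\nu_2+k)),
\end{align*}
with symmetric expressions on the positive side.

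Next I would translate membership $s^m\cdot\la\in\Ga_1^r(D)$ into two conditions, using that the coordinate sum $r$ is preserved: (i) both entries are non-negative, and (ii) the first entry is at least the residue of the second modulo $l$, which is exactly what membership in $P_1(D)+lP(D)$ amounts to. The corollary then follows by checking (i) and (ii) at the boundary indices of $m$.

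In Case (1), $\mu_1\leq l-2$, the weight $s^{-2\nu_1}\cdot\la=(\mu_1,\mu_2+l(\nu_1+\nu_2))$ satisfies both (i) and (ii), while $s^{-(2\nu_1+1)}\cdot\la=(\mu_2-1,\mu_1+1+l(\nu_1+\nu_2))$ either has negative first entry (when $\mu_2=0$) or, for $\mu_2\geq 1$, fails (ii) because the residue of the second entry is $\mu_1+1>\mu_2-1$. Symmetrically on the positive side, $s^{2\nu_2}\cdot\la\in\Ga_1^r(D)$ but $s^{2\nu_2+1}\cdot\la$ has second entry $\mu_1+1-l<0$. This yields the range $-2\nu_1\leq m\leq 2\nu_2$.

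In Case (2), $\mu_1\geq l-1$, the obstruction is pushed out by one step at each end. At $m=-(2\nu_1+1)$ condition (ii) becomes $\mu_1-\mu_2\leq l-2$, which is precisely the non-Steinberg hypothesis, so $s^{-(2\nu_1+1)}\cdot\la\in\Ga_1^r(D)$; at $m=-(2\nu_1+2)$ the weight $(\mu_1-l,\ldots)$ fails (i) when $\mu_1=l-1$ and fails (ii) when $\mu_1\geq l$, since $\mu_1\leq\mu_2+l-1$ forces $\mu_1-l<\mu_2$. Symmetrically, $s^{2\nu_2+1}\cdot\la$ now has non-negative second entry $\mu_1+1-l\geq 0$ and lies in $\Ga_1^r(D)$, while $s^{2\nu_2+2}\cdot\la$ fails because $\mu_2-l<0$. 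This yields $-2\nu_1-1\leq m\leq 2\nu_2+1$. The main obstacle is the careful bookkeeping of condition (ii) at the boundary indices; in particular, Case (2) at its lower end splits into the sub-cases $\mu_1=l-1$ and $\mu_1\geq l$, and one must verify that both give the same bound.
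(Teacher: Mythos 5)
Your argument is correct and is exactly the intended one: the paper states this corollary as an immediate consequence of Lemma \ref{block1}, and your proof simply carries out that computation, iterating $s^{\pm}$ on $\la=\mu+l\nu$ and testing membership in $\Ga_1^r(D)$ at the boundary indices (your membership criterion, nonnegativity plus first entry at least the residue of the second modulo $l$, is the right reformulation of lying in $P_1(D)+lP(D)$). The only points worth making explicit are that $\mu_2\geq 1$ in case (2) (forced by $\mu_1\geq l-1$ and $\la\notin\St$), and that beyond the first failing index the relevant coordinate decreases strictly under further applications of $s^{\pm}$, so no weight re-enters $\Ga_1^r(D)$.
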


Now let us classify the semisimple block for $s_k(2,r)$ as follows.
Let
\begin{equation*}
\begin{split}
\sI &=\left\{\bigg(\frac{r+i}{2},\frac{r-i}{2}\bigg)\,\big|\,
0 \leq i\leq l-2;\,i<2l-r-2;\,r-i\geq 0\text{ is even}\right\};\\
\sJ &=\left\{\bigg(\frac{r+i}{2},\frac{r-i}{2}\bigg)\,\big|\,
0 \leq i\leq l-2;\,i\geq 2l-r-2;\,r-i\text{ is even}\right\}.
\end{split}
\end{equation*}

\begin{Prop}\label{semisimple non-Steinberg block}
The blocks $\h \sB_1^r(\la)$ $(\la\in\sI)$ are all semisimple
non-Steinberg blocks of $s_k(2,r)$.
\end{Prop}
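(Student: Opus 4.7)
The plan is to apply Corollary \ref{block2} to determine the size of every non-Steinberg block of $s_k(2,r)$ and to identify the singletons with the elements of $\sI$. A block of $s_k(2,r)$ is semisimple precisely when it contains a single weight, so the proposition reduces to the combinatorial task of characterizing those non-Steinberg $\la\in\Ga_1^r(D)$ for which $\h\sB_1^r(\la)=\{\la\}$.

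For the forward direction I would take $\la=\big(\frac{r+i}{2},\frac{r-i}{2}\big)\in\sI$ and unpack the defining conditions of $\sI$: $r-i\geq 0$ even forces $\la\in\mbn^2$ with $\la_1+\la_2=r$; $0\leq i\leq l-2$ gives $0\leq \la_1-\la_2\leq l-2$; and $i<2l-r-2$, combined with the parity of $r+i$, is equivalent to $\la_1\leq l-2$. Hence $\la\in P_1(D)\cap\Ga_1^r(D)$, so in the decomposition $\la=\mu+l\nu$ we have $\mu=\la$ and $\nu=(0,0)$ with $\mu_1\leq l-2$. Case (1) of Corollary \ref{block2} then yields $\h\sB_1^r(\la)=\{\la\}$, a singleton (hence semisimple) block, and $\la_1-\la_2+1=i+1\in[1,l-1]$ is not a multiple of $l$, so $\la\notin\St$.

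For the converse I would start with an arbitrary non-Steinberg $\la\in\Ga_1^r(D)$ for which $\h\sB_1^r(\la)$ is semisimple, write $\la=\mu+l\nu$, and invoke Corollary \ref{block2}: the block contains $2(\nu_1+\nu_2)+1$ weights when $\mu_1\leq l-2$ and $2(\nu_1+\nu_2)+3$ weights when $\mu_1\geq l-1$. The latter is never $1$, so case (2) is excluded. In case (1) a singleton forces $\nu=(0,0)$, hence $\la=\mu\in P_1(D)$ with $\la_1\leq l-2$. Setting $i=\la_1-\la_2$, the non-Steinberg hypothesis gives $i\leq l-2$, the condition $\la_1\leq l-2$ together with the parity of $r-i$ amounts exactly to $i<2l-r-2$, and $\la_2\geq 0$ gives $r-i\geq 0$, so $\la\in\sI$.

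The main point requiring care is ensuring that the weights $s^m\cdot\la$ listed in Corollary \ref{block2} are pairwise distinct, so that the block sizes really are $2(\nu_1+\nu_2)+1$ or $+3$ and not smaller. This relies on the non-Steinberg hypothesis: since $\la_1-\la_2+1$ is not divisible by $l$, the residue $s$ in the definition of $s^{\pm}\cdot\la$ is nonzero, and a short direct calculation shows $s^+$ has no fixed weights and that $s^+$, $s^-$ are mutual inverses. Beyond this, the only bookkeeping is translating $\la_1\leq l-2$ into the inequality $i<2l-r-2$, which depends on the fact that $r+i$ must be even.
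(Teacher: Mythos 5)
Your "semisimple $\Rightarrow$ singleton $\Rightarrow$ $\la\in\sI$" direction is essentially the paper's argument: semisimplicity of a block forces a single simple module (a block is indecomposable, so a semisimple block is a single matrix algebra), then Corollary \ref{block2} rules out the case $\mu_1\geq l-1$ and forces $\nu=0$, and the same bookkeeping with $i=\la_1-\la_2$ (including the parity point that $i<2l-r-2$ with $r+i$ even is the same as $\la_1\leq l-2$) lands you in $\sI$. Your remark about the distinctness of the weights $s^m\cdot\la$ is consistent with how the paper uses \ref{block2} to count block sizes, and your verification that $\la\in\sI$ is non-Steinberg matches the paper's.

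The gap is in the converse. You reduce the whole proposition to the equivalence ``a block of $s_k(2,r)$ is semisimple precisely when it contains a single weight,'' but only one implication of this is automatic. Knowing from Corollary \ref{block2} that $\h\sB_1^r(\la)=\{\la\}$ tells you the block has a single simple module; it does not tell you the block algebra is semisimple. A block with one simple module can perfectly well be a non-semisimple local algebra (think of $k[x]/(x^2)$), and Corollary \ref{block2} is purely a linkage statement --- it says nothing about extensions or about the structure of the projective cover $\h P_1(\la)$. The structural input the paper quotes for projectives (Lemma \ref{block for sq(2,r)1}) only covers blocks with at least three weights, so it cannot be used to see that $\h P_1(\la)=\h L_1(\la)$ in the singleton case; also, $s_k(2,r)$ is not quasi-hereditary in general, so you cannot appeal to a highest-weight-category argument either. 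The paper closes exactly this point by citing \cite[6.0.2]{EF}, which asserts directly that $\h\sB_1^r(\la)$ is semisimple for $\la\in\sI$ (just as the semisimplicity of Steinberg blocks is invoked as a known fact, not deduced from their being singletons). To repair your proof you would need either that citation or an independent argument that $\h L_1(\la)$ is projective (equivalently injective) over $s_k(2,r)$ for $\la\in\sI$.
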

\begin{proof}
Assume $\h\sB_1^r(\la)$ ($\la\in\Ga_1^r(D)$) is a semisimple
non-Steinberg block of $s_k(2,r)$. We need to prove that
$\la\in\sI$. We write $\la=\mu+l\nu$, where $\mu\in P_1(D)$ and
$\nu\in P(D)$. Since $\h\sB_1^r(\la)$ is semisimple, we have
$|\h\sB_1^r(\la)|=1$. So by \ref{block2} we have $\mu_1\leq l-2$ and
$\nu=0$. We denote $i=\la_1-\la_2$. Then
$\la=(\frac{r+i}{2},\frac{r-i}{2})$ and $r-i=2\la_2\geq 0$ is even.
Furthermore, since $\la=\mu\in P_1(D)$ and $\la_1=\mu_1\leq l-2$, we
have $0\leq i\leq\la_1\leq l-2$ and $2l-r-2-i=2l-2-2\la_1\geq
2l-2-2(l-2)>0$. Thus $\la\in\sI$.

On the other hand, by \cite[6.0.2]{EF}, $\h \sB_1^r(\la)$ is
semisimple for $\la\in\sI$. Furthermore, if
$\la=(\frac{r+i}{2},\frac{r-i}{2})\in\sI$, then we have $r+i< 2l-2$
and $r+i$ is even. So $r+i\leq 2l-4$. It follows that
$\la_1-\la_2+1\leq\la_1+1=\frac{r+i}{2}+1\leq l-1$ and hence
$\la\not\in\St$. This completes the proof.
\end{proof}

By \cite{EN}, we know that $s_k(2,r)=S_k(2,r)$ is semisimple for
$l>r$. If $l\leq r$, we can determine all non-semisimple blocks of
$s_k(2,r)$ as follows.

\begin{Prop}\label{non-semisimple block}
Assume $l\leq r$. Then the blocks $\h \sB_1^r(\la)$ $(\la\in\sJ)$ are all non-semisimple blocks of $s_k(2,r)$.
\end{Prop}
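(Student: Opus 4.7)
The proof plan consists of two parts: (a) show that for every $\la \in \sJ$, the block $\h\sB_1^r(\la)$ is non-semisimple; and (b) show that every non-semisimple block of $s_k(2,r)$ equals $\h\sB_1^r(\la)$ for some $\la \in \sJ$. Combined with the Steinberg classification and Proposition \ref{semisimple non-Steinberg block}, this finishes the block picture and parametrises all non-semisimple blocks by $\sJ$.

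For part (a), I fix $\la = (\frac{r+i}{2},\frac{r-i}{2}) \in \sJ$ with $i = \la_1 - \la_2 \in [0, l-2]$ and write $\la_2 = al + b$ with $0 \leq b \leq l - 1$, $a \in \mbn$. A direct check shows the unique decomposition $\la = \mu + l\nu$ with $\mu \in P_1(D)$ has $\mu = (b + i, b)$ and $\nu = (a, a)$. The defining inequality $i \geq 2l - r - 2$ of $\sJ$ translates to $\la_1 \geq l - 1$, i.e.\ $b + i + la \geq l - 1$. Two subcases: if $b + i \geq l - 1$, Corollary \ref{block2}(2) gives $|\h\sB_1^r(\la)| = 4a + 3 \geq 3$; if $b + i \leq l - 2$, the displayed inequality forces $a \geq 1$, and Corollary \ref{block2}(1) gives $|\h\sB_1^r(\la)| = 4a + 1 \geq 5$. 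Either way the block carries more than one simple module and is therefore non-semisimple.

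For part (b), let $B$ be a non-semisimple block and pick any $\la' \in B$. By iteratively applying $s^+$ (when $\la'_1 < \la'_2$) or $s^-$ (when $\la'_1 - \la'_2 \geq l$), I produce a representative $\la \in B$ with $\la_1 - \la_2 \in [0, l - 1]$; each such step stays inside $\Ga_1^r(D)$ by the orbit description in Corollary \ref{block2}, and strictly decreases $|\la'_1 - \la'_2|$ (using that non-Steinberg orbit elements have $s \neq 0$ in the expression $\la_1 - \la_2 + 1 = ml + s$), so the process terminates. If $\la_1 - \la_2 = l - 1$ then $\la$ is a Steinberg weight and $B = \{\la\}$ would be semisimple, contradicting the hypothesis. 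Hence $\la_1 - \la_2 \in [0, l - 2]$, placing $\la$ in the disjoint union $\sI \cup \sJ$; Proposition \ref{semisimple non-Steinberg block} excludes $\la \in \sI$ (which would force $B$ semisimple), so $\la \in \sJ$. Different elements of $\sJ$ yield different blocks because every orbit meets the fundamental domain $\{\la : 0 \leq \la_1 - \la_2 \leq l - 1\}$ in at most one element.

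The main technical obstacle is justifying the fundamental-domain walk in (b): confirming that applying $s^\pm$ keeps the element inside $\Ga_1^r(D)$ and strictly decreases $|\la'_1 - \la'_2|$ at each step. This is straightforward but slightly case-laden, and it follows cleanly from the explicit chain structure of orbits provided by Corollary \ref{block2}. The hypothesis $l \leq r$ is used only to guarantee that $\sJ$ is nonempty and that non-semisimple blocks actually exist.
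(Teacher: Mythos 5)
Your proof is correct, but it differs from the paper's in both halves, in ways worth noting. For the direction ``$\la\in\sJ$ implies non-semisimple,'' the paper simply cites \cite[6.0.2]{EF}, whereas you deduce it from Corollary \ref{block2}: writing $\la=(b+i,b)+l(a,a)$ you get $|\h\sB_1^r(\la)|=4a+3$ or $4a+1\geq 5$, so the block has at least two simples and cannot be semisimple (a semisimple block, being indecomposable, is a single matrix algebra). This is more self-contained than the paper's citation; you only need the trivial remark that $\la_1-\la_2+1=i+1\leq l-1$ is not divisible by $l$, so \ref{block2} applies. For the converse, the paper reaches the fundamental-domain representative in one closed-form step, $\dt=s^{\nu_2-\nu_1}\cdot\la$, and then verifies $\dt\in\sJ$ directly, checking $i\geq 2l-r-2$ by an explicit four-case computation; you instead walk to the representative iteratively and then argue by elimination: the representative lies in $\St\cup\sI\cup\sJ$, and $\St$ and $\sI$ are excluded because their blocks are semisimple (the Steinberg remark and Proposition \ref{semisimple non-Steinberg block}). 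Your elimination avoids the paper's four-case inequality check, at the cost of leaning on \ref{semisimple non-Steinberg block} (which itself rests on \cite[6.0.2]{EF}); both routes are legitimate since that proposition precedes this one. The one place you wave your hands --- that each $s^{\pm}$ step stays in the block and strictly decreases $|\la'_1-\la'_2|$ --- does check out: if $\la'_1<\la'_2$ then the decomposition $\la'=\mu'+l\nu'$ forces $\nu'_2\geq 1$, and if $\la'_1-\la'_2\geq l$ it forces $\nu'_1\geq 1$, so $s^{\pm 1}\cdot\la'$ lies in the index range of \ref{block2}, and a short computation with $\la'_1-\la'_2+1=ml+s$, $s\neq 0$, gives the strict decrease; alternatively you could replace the whole walk by the paper's single formula $\dt=s^{\nu_2-\nu_1}\cdot\la$ and keep your elimination argument, which would shorten the proof further.
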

\begin{proof}
By \cite[6.0.2]{EF}, $\h \sB_1^r(\la)$ is non-semisimple for $\la\in\sJ$. Now we assume $\sB$ is a non-semisimple block of $s_k(2,r)$. We need to
prove that $\sB\cap\sJ\not=\emptyset$. Fix $\la\in\sB$. We write
$\la=\mu+l\nu$ with $\mu\in P_1(D)$ and $\nu\in P(D)$. Let
$\dt=s^{\nu_2-\nu_1}\cdot\la$. Then by \ref{block2} we have
$\dt\in\sB$. So it is enough to prove that $\dt\in\sJ$. By
definition we have
\begin{equation*}
\dt=
\begin{cases}
(\mu_2-1,\mu_1+1)+l(\frac{\nu_1+\nu_2+1}{2},\frac{\nu_1+\nu_2-1}{2})&\text{if $\nu_2-\nu_1$ is odd;}\\
\mu+l(\frac{\nu_1+\nu_2}{2},\frac{\nu_1+\nu_2}{2})&\text{if $\nu_2-\nu_1$ is even}.
\end{cases}
\end{equation*}
Let $i=\dt_1-\dt_2$. Then
\begin{equation*}
i=
\begin{cases}
\mu_2-\mu_1-2+l &\text{if $\nu_2-\nu_1$ is odd;}\\
\mu_1-\mu_2 &\text{if $\nu_2-\nu_1$ is even}.
\end{cases}
\end{equation*}
Since $\sB$ is non-semisimple, we have $l\not|\,\la_1-\la_2+1$. Thus, since $\mu\in P_1(D)$, we have $0\leq\mu_1-\mu_2\leq l-2$. It follows that $0\leq i\leq l-2$. So it remains to prove that $i\geq 2l-r-2$.
We divide four cases.

Case (1). We assume $\mu_1\leq l-2$ and $\nu_2-\nu_1$ is odd.  By \ref{block2} we have $|\sB|=2(\nu_1+\nu_2)+1$. Since $\sB$ is non-semisimple we have $\nu_1+\nu_2\geq 1$. Thus $i-2l+r+2=2\mu_2+l(\nu_1+\nu_2-1) \geq 0$.

Case (2). We assume $\mu_1\leq l-2$ and $\nu_2-\nu_1$ is even.  Since $|\sB|=2(\nu_1+\nu_2)+1\geq 3$ by \ref{block2} and $\nu_2-\nu_1$ is even, we have $\nu_1+\nu_2\geq 2$. It follows that $i-2l+r+2=2\mu_1+2+l(\nu_1+\nu_2-2)\geq 0$.

Case (3). We assume $\mu_1\geq l-1$ and $\nu_2-\nu_1$ is odd. Since $|\sB|=2(\nu_1+\nu_2+1)+1\geq 3$ by \ref{block2} and $\nu_2-\nu_1$ is odd, we have $\nu_1+\nu_2\geq 1$.  It follows that $i-2l+r+2=2\mu_2+l(\nu_1+\nu_2-1) \geq 0$.

Case (4). We assume $\mu_1\geq l-1$ and $\nu_2-\nu_1$ is even.  Since $|\sB|=2(\nu_1+\nu_2+1)+1\geq 3$ by \ref{block2} we have $\nu_1+\nu_2\geq 0$. It follows that
$i-2l+r+2=2\mu_1+2+l(\nu_1+\nu_2-2)\geq 2(\mu_1+1-l)\geq 0$.
\end{proof}

Using the description of blocks for $s_k(2,r)$,  we can determine the block for $\tuk(2,r)$ as follows.
\begin{Prop}\label{blocks for u(2,r)}
$(1)$ The algebra $\tuk(2,r)$ is semisimple for $l>r$.

$(2)$ The blocks $\sB_1^r(\lb)$ $(\lb\in\ol{\St}\cup\ol{\sI})$ are all semisimple blocks of $\tuk(2,r)$.

$(3)$ Assume $l\leq r$. Then the blocks $\sB_1^r(\lb)$ $(\lb\in\ol{\sJ})$ are all non-semisimple blocks of $\tuk(2,r)$.
\end{Prop}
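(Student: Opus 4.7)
The plan is to deduce all three parts by combining the block classification for $s_k(2,r)$ from Propositions \ref{semisimple non-Steinberg block} and \ref{non-semisimple block} with two compatibility facts. First, Lemma \ref{socle-indc} says every indecomposable $s_k(2,r)$-module remains indecomposable when restricted to $\tuk(2,r)$, so the bar image of each $s_k(2,r)$-block lies in a single $\tuk(2,r)$-block. Second, by \ref{image of sl2} one has $\tuk(2,r)=\zr(u_k'(2))$, whence each $\tuk(2,r)$-block also lies in a single $u_k'(2)$-block, and these are the Steinberg block $\{L_l\}$ and the blocks $\{L_j,L_{l-j}\}$ for $1\leq j\leq l-1$ described in \S3.

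For part $(1)$, I would invoke \cite{EN}: when $l>r$, $s_k(2,r)=S_k(2,r)$ is semisimple, so its regular representation is a direct sum of the simples $\h L_1(\la)$. Using \cite[5.5]{DFW2} to identify $\h L_1(\la)|_{\tuk(2,r)}=L_1(\lb)$, $s_k(2,r)$ is semisimple as a left $\tuk(2,r)$-module, and hence so is its $\tuk(2,r)$-submodule $\tuk(2,r)$; thus $\tuk(2,r)$ is a semisimple algebra.

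For part $(2)$, I would split into the cases $\lb\in\ol\St$ and $\lb\in\ol\sI$. If $\lb\in\ol\St$, pick $\la\in\St$ with $\ol\la=\lb$; then $L_1(\lb)\cong L_l$ as a $u_k'(2)$-module, and $L_l$ is projective-injective in $u_k'(2)$ (its block being $\{L_l\}$). Projectivity and injectivity transfer through the surjection $u_k'(2)\twoheadrightarrow\tuk(2,r)$, so $L_1(\lb)$ is projective-injective in $\tuk(2,r)$ and the block $\sB_1^r(\lb)=\{L_1(\lb)\}$ is semisimple. If $\lb\in\ol\sI$ with representative $\la=(\tfrac{r+i}{2},\tfrac{r-i}{2})$, then $L_1(\lb)\cong L_{i+1}$ lies in the $u_k'(2)$-block $\{L_{i+1},L_{l-i-1}\}$, so any partner simple of $L_1(\lb)$ in $\sB_1^r(\lb)$ must be some $L_1(\mb)$ with $\mu_1-\mu_2$ congruent to $i$ or $l-i-2$ modulo $l$. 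I would check that the inequalities $0\leq i\leq l-2$ and $i<2l-r-2$ in the definition of $\sI$ force the only such $\mu\in\Ga_1^r(D)$ to be $\la$ itself; the radical structure of $P_{i+1}$ from Lemma \ref{Pj} then gives $\Ext^1_{u_k'(2)}(L_{i+1},L_{i+1})=0$ and hence $\Ext^1_{\tuk(2,r)}(L_1(\lb),L_1(\lb))=0$, making $L_1(\lb)$ projective in $\tuk(2,r)$ and the block semisimple. The converse---that every semisimple block is of this form---follows from noting that every $s_k(2,r)$-block meets $\St\cup\sI\cup\sJ$ by Lemma \ref{St}, Proposition \ref{semisimple non-Steinberg block}, and Proposition \ref{non-semisimple block}, coupled with the fact (from part $(3)$) that a representative in $\sJ$ produces a non-semisimple $\tuk(2,r)$-block.

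For part $(3)$, if $\lb\in\ol\sJ$ with representative $\la\in\sJ$, then by Proposition \ref{non-semisimple block} the block $\h\sB_1^r(\la)$ is non-semisimple, so the indecomposable projective cover of $\h L_1(\la)$ in $s_k(2,r)$ is not simple. By Lemma \ref{socle-indc} its restriction is a non-simple indecomposable $\tuk(2,r)$-module inside $\sB_1^r(\lb)$, forcing $\sB_1^r(\lb)$ to be non-semisimple. Conversely, any non-semisimple $\tuk(2,r)$-block must be indexed by some $\lb\in\ol\sJ$, by the converse already established in part $(2)$. The step I expect to be the main obstacle is the combinatorial verification in the $\ol\sI$ case: one must rule out every $\mu\in\Ga_1^r(D)$ with $\mu_1-\mu_2\equiv l-i-2\pmod{l}$, which requires a careful case analysis on the shift index $m$ (writing $\mu_1-\mu_2=l-i-2+lm$)---using that $l$ odd forces $m$ to have a prescribed parity, and that $\la_1=(r+i)/2<l-1$ together with $\la_2<l$ excludes both positive and negative shifts from producing a $\mu$ in $(P_1(D)+lP(D))\cap\mbn^2$.
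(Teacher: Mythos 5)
Your proposal is correct, but it takes a genuinely different route from the paper. The paper settles (1) by quoting \cite[8.2]{DFW1} together with \cite{EN} (for $l>r$ one has $\tuk(2,r)=S_k(2,r)$, which is semisimple), and settles (2)--(3) in one stroke by quoting \cite[8.4]{DFW2}, which gives the exact block correspondence $\ol{\hsB_1^r(\la)}=\sB_1^r(\lb)$, so that the classification transfers directly from \ref{semisimple non-Steinberg block} and \ref{non-semisimple block}. You avoid both citations: for (1) you embed the regular $\tuk(2,r)$-module into $s_k(2,r)$, which is semisimple over $\tuk(2,r)$ because simple $s_k(2,r)$-modules restrict to simples; for (2)--(3) you squeeze each $\tuk(2,r)$-block between the bar-image of an $s_k(2,r)$-block (via \ref{socle-indc}: restricted indecomposable projectives stay indecomposable, so linked simples stay linked) and a single $u_k'(2)$-block (via the surjection in \eqref{image of sl2}, which preserves non-split extensions of modules killed by the kernel), and then exploit the explicit $u_k'(2)$-block structure of \S3 together with combinatorics in $\Ga_1^r(D)$. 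The paper's route is shorter and yields the stronger statement that the block weight-sets literally correspond, which is implicitly reused later (e.g.\ in \ref{projective cover}); your route is longer but self-contained relative to \S3--\S4 and makes the mechanism visible. The step you flag as the main obstacle does go through, in fact more cheaply than you expect: since $l$ is odd, $\ol{\mu_1-\mu_2}=\ol{i}$ together with $\mu_1+\mu_2=r$ already forces $\ol\mu=\ol\la$, while for $\ol{\mu_1-\mu_2}=\ol{l-i-2}$ the parity constraint and $|\mu_1-\mu_2|\leq r\leq 2l-4-i$ leave only $\mu_1-\mu_2=-(i+2)<0$, and then any decomposition $\mu=\al+l\bt$ with $\al\in P_1(D)$, $\bt\in P(D)$ would need $\bt_2\geq 1$, hence $\mu_2\geq l$, contradicting $\mu_2=\frac{r+i+2}{2}\leq l-1$; combined with $\Ext^1_{u_k'(2)}(L_{i+1},L_{i+1})=0$ from \ref{Pj}, this pins the block down to $\{L_1(\lb)\}$ and gives semisimplicity. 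When writing it up, do make explicit that projectivity and injectivity of the Steinberg module descend along $u_k'(2)\twoheadrightarrow\tuk(2,r)$ precisely because the kernel acts trivially, that in (3) the projective $\h P_1(\la)$ is non-simple because non-semisimple blocks of $s_k(2,r)$ have at least three weights (so \ref{block for sq(2,r)1} applies), and that the converse directions unwind without circularity: the forward direction of (3) uses only \ref{non-semisimple block}, \ref{block for sq(2,r)1} and \ref{socle-indc}, and then feeds into the converses of (2) and (3) via the fact that every $s_k(2,r)$-block meets $\St\cup\sI\cup\sJ$.
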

\begin{proof}
If $l>r$, then by \cite[8.2]{DFW1} and \cite{EN}, we have $\tuk(2,r)=S_k(2,r)$ is semisimple. Furthermore, by \cite[8.4]{DFW2},
we have
\begin{equation}\label{relation of blocks}
\ol{\h\sB_1^r(\la)}=\sB_1^r(\lb)
\end{equation}
for $\la\in\Ga_1^r(D)$. Thus the assertions (2) and (3) follow  from \ref{semisimple non-Steinberg block}, \ref{non-semisimple block}.
\end{proof}

Now we will study the relation between indecomposable projective modules for $s_k(2,r)$ and  $\tuk(2,r)$. For $\la\in\Ga_1^r(D)$, let $\h P_1(\la)$ (resp. $\h I_1(\la)$) be the projective cover (resp. injective hull) of $\h L_1(\la)$ as a $\sk(2,r)$-module. Similarly, for $\la\in\Ga_1^r(D)$, let
$P_{1}(\lb)$ (resp. $I_1(\lb)$) be the projective cover (resp. injective hull) of $ L_1(\lb)$ as an $\tuk(2,r)$-module.

\begin{Lem}[{\cite[3.6]{EF}}]\label{block for sq(2,r)1}
Assume $\hsB_1^r(\la)=\{\la^{(0)},\cdots,\la^{(t)}\}$, where
$s^+\cdot\la^{(i)}=\la^{(i+1)}$ and $t\geq 2$. The projective covers $\h P_{1}(\la^{(i)})$
of the simple modules $\h L_1(\la^{(i)})$ have the following structure:
\begin{eqnarray*}
\h P_{1}(\la^{(0)}): \h L_1(\la^{(0)}) & \h P_{1}(\la^{(j)}):\qquad\qquad \h L_1(\la^{(j)}) & \h P_{1}(\la^{(t)}):\h L_1(\la^{(t)})    \\
   \h L_1(\la^{(1)}) &\qquad (j=1, \cdots t-1)\quad  \h L_1(\la^{(j-1)}) \ \quad \h L_1(\la^{(j+1)})& \qquad\qquad \h L_1(\la^{(t-1)}).  \\
&\qquad \qquad\qquad \ \quad\quad \h L_1(\la^{(j)}) &
\end{eqnarray*}
\end{Lem}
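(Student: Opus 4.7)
The plan is to compute the Loewy structure of each $\h P_1(\la^{(j)})$ via the quasi-hereditary structure of $s_k(2,r)$, together with explicit knowledge of standard and costandard modules for $\frak{sl}_2$-type at an odd root of unity. Recall that $s_k(2,r)$ is quasi-hereditary with respect to the dominance order on $\Gamma_1^r(D)$ (by work of Cox and Parker). A short calculation with the formula for $s^+\cdot\la$ shows that $s^+$ strictly increases $\la_1$ by $l - s$ where $s = (\la_1 - \la_2 + 1) \bmod l$, so $\la^{(0)}, \la^{(1)}, \ldots, \la^{(t)}$ form a strictly monotone chain in the dominance order; in particular, exactly one of $s^-\cdot\la^{(0)}$ and $s^+\cdot\la^{(t)}$ lies outside $\Gamma_1^r(D)$ at each end.

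Next I would determine the Loewy structure of each standard module $\h\Delta(\la^{(j)})$ and costandard module $\h\nabla(\la^{(j)})$. For $\frak{sl}_2$-type at an odd root of unity the Jantzen sum formula collapses and each $\h\Delta$, $\h\nabla$ in a non-Steinberg block has length at most $2$. Concretely, for the interior weights ($j \geq 1$) I expect $\h\Delta(\la^{(j)})$ to be a nonsplit extension of $\h L_1(\la^{(j)})$ by $\h L_1(\la^{(j-1)})$, while at the boundary $\h\Delta(\la^{(0)}) = \h L_1(\la^{(0)})$ because its would-be socle $\h L_1(s^-\cdot\la^{(0)})$ has been truncated away by the restriction to $\Gamma_1^r(D)$; dually for $\h\nabla$ at the other end.

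I would then invoke BGG reciprocity
\[
(\h P_1(\la^{(j)}):\h\Delta(\la^{(k)})) = [\h\nabla(\la^{(k)}):\h L_1(\la^{(j)})]
\]
to determine the $\Delta$-filtration of each $\h P_1(\la^{(j)})$, combine this with the Loewy structure of each $\h\Delta$ from the previous step, and finally use the fact that $\Ext^1_{s_k(2,r)}(\h L_1(\la^{(i)}), \h L_1(\la^{(j)}))$ is one-dimensional for $|i-j| = 1$ and vanishes otherwise. This $\Ext^1$ computation, together with projectivity of $\h P_1(\la^{(j)})$ and the $\Delta$-filtration, uniquely pins down the radical layers of each $\h P_1(\la^{(j)})$ as claimed.

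The principal obstacle is the boundary analysis at $\la^{(0)}$ and $\la^{(t)}$: one must verify that the asymmetric truncation of $\Gamma_1^r(D)$ forces $\h P_1(\la^{(0)})$ and $\h P_1(\la^{(t)})$ to have only two composition factors rather than the four appearing at the interior, and in particular that their heads and socles differ (so they are not self-dual, in contrast to the interior projectives). Tracking exactly which composition factors of $\h\Delta(\la^{(k)})$ for $k$ near the endpoints survive the truncation, and propagating this through BGG reciprocity, is the crux of the argument.
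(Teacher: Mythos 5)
First, note that the paper does not prove this lemma at all: it is imported verbatim from \cite[3.6]{EF}, so the only question is whether your sketch could stand as a proof. As written it cannot, because it rests on a false structural premise. The infinitesimal $q$-Schur algebra $s_k(2,r)$ is \emph{not} quasi-hereditary, so there are no standard modules $\h\Delta(\la)$ indexed by $\Ga_1^r(D)$ satisfying the BGG reciprocity you invoke. Indeed, the very statement you are trying to prove rules this out: with the asserted Loewy structures, the Cartan matrix of the block $\hsB_1^r(\la)$ is symmetric tridiagonal with diagonal $(1,2,\dots,2,1)$ and off-diagonal entries $1$ (for $t=2$ it is $\left(\begin{smallmatrix}1&1&0\\1&2&1\\0&1&1\end{smallmatrix}\right)$), and its determinant is $0$ for every $t\geq1$, whereas a quasi-hereditary algebra always has Cartan determinant $1$. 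Equivalently, your boundary conclusion that \emph{both} endpoint projectives have only two composition factors is exactly what cannot happen in a highest weight category (compare the genuinely quasi-hereditary $S_k(2,r)$, where one endpoint projective of a block has length $3$ and only the maximal one has length $2$). So the chain ``quasi-heredity $\Rightarrow$ $\Delta$-filtrations $\Rightarrow$ reciprocity $\Rightarrow$ radical layers'' fails at the first step; likewise the $\Ext^1$ statement you want to quote for $s_k(2,r)$ is not available independently of already knowing these projectives.

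The correct route — the one taken in \cite[3.6]{EF}, and the same mechanism this paper exploits in \ref{restriction of projective module} — is to work with the category of $G_1T$-modules rather than with a putative highest weight structure on $s_k(2,r)$ itself. There one does have reciprocity, but with baby Verma modules $\h Z_1(\la)$: the $G_1T$-injective hulls $\h Q_1(\la)$ in rank one have a known $\h Z_1$-filtration with two factors and the self-dual length-$4$ Loewy picture. For the interior weights $\la^{(1)},\dots,\la^{(t-1)}$ one shows that the $s_k(2,r)$-injective hull $\h I_1(\la^{(j)})$ coincides with the restriction of $\h Q_1(\la^{(j)})$, and the contravariant duality converts injectives into projectives; the two boundary weights require a separate argument showing that the truncation to polynomial weights of degree $r$ cuts the module down to length $2$. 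This truncation analysis is precisely the ``crux'' you flag at the end, but it needs the $G_1T$ input (or an equivalent explicit computation) — it cannot be reached with the quasi-hereditary/BGG machinery you propose.
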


\begin{Prop}\label{restriction of projective module}
Assume $\hsB_1^r(\la)=\{\la^{(0)},\cdots,\la^{(t)}\}$, where
$s^+\cdot\la^{(i)}=\la^{(i+1)}$ and $t\geq 2$. Then  $\h P_{1}(\la^{(i)})\cong  P_{1}(\ol{\la^{(i)}})$ as an ${\tuk(2,r)}$-module for $1\leqslant i\leqslant t-1$.
\end{Prop}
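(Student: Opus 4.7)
The plan is to work through $u_k'(2)$: by \eqref{image of sl2}, $\tuk(2,r)=\zr(u_k'(2))$ is a quotient of $u_k'(2)$, so $\tuk(2,r)$-modules coincide with $u_k'(2)$-modules killed by $\ker\zr$. The strategy is to show that $\h P_1(\la^{(i)})$ is already projective as a $u_k'(2)$-module (by identifying it with one of the modules $P_j$ from \S3), transfer projectivity to $\tuk(2,r)$ (any $u_k'(2)$-linear map between two $\tuk(2,r)$-modules is automatically $\tuk(2,r)$-linear, so projectivity descends), and conclude via the universal property of the projective cover.

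Fix $1\leq i\leq t-1$. Since $\la\notin\St$, the residue $s$ with $\la^{(0)}_1-\la^{(0)}_2+1\equiv s\pmod l$ is nonzero, and a short computation from the definition of $s^+$ shows that $\la^{(i)}_1-\la^{(i)}_2+1$ is congruent to $s$ or $l-s$ modulo $l$ according to the parity of $i$. Hence $\h L_1(\la^{(i)})|_{u_k'(2)}$ is isomorphic to $L_s$ or $L_{l-s}$; both lie in the block of $u_k'(2)$ whose basic algebra is $\mathscr A$ (Lemma~\ref{basic algebra of sl2}). Combined with Lemma~\ref{block for sq(2,r)1}, the $u_k'(2)$-composition factors of $\h P_1(\la^{(i)})$ are $L_s, L_{l-s}, L_{l-s}, L_s$, so $\dim_k\h P_1(\la^{(i)})=2l$. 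By Lemma~\ref{socle-indc} (together with the fact that $\tuk(2,r)$- and $u_k'(2)$-submodules of a $\tuk(2,r)$-module agree), $\h P_1(\la^{(i)})$ is indecomposable over $u_k'(2)$ with simple socle, say $L_s$.

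To pin this down as $P_s$, note that any nonzero submodule of $\h P_1(\la^{(i)})$ contains the simple socle $L_s$, making $L_s\hookrightarrow\h P_1(\la^{(i)})$ essential and yielding an embedding $\h P_1(\la^{(i)})\hookrightarrow I(L_s)$. Since $u_k'(2)$ is self-injective (being a finite-dimensional Hopf algebra) and Lemma~\ref{Pj} shows $\soc P_s= L_s$, we have $I(L_s)\cong P_s$ of dimension $2l$; hence the embedding is an isomorphism and $\h P_1(\la^{(i)})\cong P_s$ as $u_k'(2)$-modules. Therefore $\h P_1(\la^{(i)})$ is projective over $u_k'(2)$, hence over $\tuk(2,r)$, and being indecomposable with a surjection onto $L_1(\ol{\la^{(i)}})$ (the restriction of $\h P_1(\la^{(i)})\twoheadrightarrow \h L_1(\la^{(i)})$), it is forced to be the projective cover $P_1(\ol{\la^{(i)}})$.

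The main obstacle is the middle step: one must carefully verify that the residues $\la^{(i)}_1-\la^{(i)}_2+1\pmod l$ alternate so that the $u_k'(2)$-restrictions of $\h L_1(\la^{(i)})$ collapse onto the two simples $L_s, L_{l-s}$ lying in a single block, and then squeeze the indecomposable of dimension $2l$ with simple socle $L_s$ into $P_s$ by self-injectivity—both are clean but require attention.
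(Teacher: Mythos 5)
Your proposal is correct, but it takes a genuinely different route from the paper's own proof. The paper stays inside the quantum Frobenius kernel picture: it quotes \cite[3.5]{EF} to identify $\h I_1(\la^{(i)})$ with the $G_1T$-injective hull $\h Q_1(\la^{(i)})$ for the middle weights $1\leq i\leq t-1$, Donkin's restriction theorem \cite[3.2(10)]{Donbk} to see that this module remains injective over $G_1$ (hence over $\tuk(2,r)$), and then Lemma \ref{socle-indc} to conclude that the restriction is the injective hull (equivalently, in this situation, the projective cover) of $L_1(\ol{\la^{(i)}})$. You instead pass through the infinitesimal $\frak{sl}_2$: from the composition factors and simple socle supplied by Lemma \ref{block for sq(2,r)1}, the alternation of the residues $\la^{(i)}_1-\la^{(i)}_2+1 \pmod l$ under $s^+$ (which is a correct computation), and the self-injectivity of the finite-dimensional Hopf algebra $u_k'(2)$, you identify $\h P_1(\la^{(i)})|_{u_k'(2)}$ with the Chari--Premet module $P_s$ by an essential-extension-plus-dimension argument, then descend projectivity from $u_k'(2)$ to its quotient $\tuk(2,r)$ (your lifting argument for the descent is valid, since a $u_k'(2)$-linear lift between $\tuk(2,r)$-modules is automatically $\tuk(2,r)$-linear), and finish by uniqueness of the projective cover of a simple quotient of an indecomposable projective. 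Both arguments are sound; the paper's is shorter but leans on external facts about $G_1$ and $G_1T$, while yours is more self-contained relative to \S3 and in effect already produces the identification $P_1(\ol{\la^{(i)}})\cong P_j$ over $u_k'(2)$ that the paper only extracts later, in the proof of \ref{basic algebra}. Two details you should state explicitly rather than leave implicit: that the restriction of the simple $s_k(2,r)$-module $\h L_1(\mu)$ to $\tuk(2,r)$ is the simple module $L_1(\ol\mu)$ (this is what makes your dimension count $\dim_k\h P_1(\la^{(i)})=2l$ and the simple-socle transfer via \ref{socle-indc} legitimate), and that $\soc P_j=L_j$, read off from \ref{Pj}, which pins the injective hull $I(L_s)$ down to $P_s$ (though for your final step it would suffice that $I(L_s)$ is some $2l$-dimensional projective). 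Neither is a gap, just points worth a sentence each.
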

\begin{proof}
 Let $\h Q_1(\la)$ (resp. $Q_1(\la)$) be the injective
hull of $\h L_1(\la)$ (resp. $L_1(\la)$) as a $G_1T$-module (resp. $G_1$-module).
Assume $1\leq i\leq t-1$. Then $\h I_1(\la^{(i)})\cong \h Q_1(\la^{(i)})$ by \cite[3.5]{EF}. Furthermore by \cite[3.2(10)]{Donbk} we have $\h Q_1(\la^{(i)})|_{G_1}\cong Q_1(\la^{(i)})$. It follows that $\h I_1(\la^{(i)})$ is injective as a $G_1$-module and hence $\h I_1(\la^{(i)})$ is injective as a $\tuk(2,r)$-module. Consequently, by \ref{socle-indc}, $\h I_1(\la^{(i)})|_{\tuk(2,r)}\cong I_1(\la^{(i)})$.
\end{proof}
\begin{Rem}
Note that in general $\h P_1(\la^{(i)})|_{\tuk(2,r)}$ is not isomorphic to $P_1(\ol{\la^{(i)}})$ for $i=0,t$.
\end{Rem}

\begin{Coro}\label{projective cover}
Assume $l\leq r$ and $\la\in\sJ$. Then we have $\sB_1^r(\lb)=\{\lb,\ol{\mu}\}$, where $\mu=s^-\cdot\la$. Furthermore if $|\hsB_1^r(\la)|\geq 5$, then the projective covers
of the simple modules $L_1(\lb)$ and $L_1(\ol{\mu})$ have the following structure:
\begin{eqnarray*}
P_{1}(\lb): \quad  L_1(\ol{\la }) &   P_{1}(\ol{\mu}): &\   L_1(\ol{\mu})   \\
    2 L_1(\ol{\mu}) & & 2  L_1(\ol{\la})     \\
  L_1(\ol{\la})& &\  L_1(\ol{\mu})
\end{eqnarray*}
\end{Coro}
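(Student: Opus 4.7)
The plan is to reduce everything modulo $l$ using \eqref{relation of blocks} together with Proposition \ref{restriction of projective module}. The block of $\tuk(2,r)$ is obtained as the image of $\hsB_1^r(\la)$ in $\ol{\Ga_1^r(D)}$, so the first task is to understand this image; the structure of the projective covers then comes from restricting suitable $\h P_1(\la^{(i)})$ to $\tuk(2,r)$ and applying Lemma \ref{block for sq(2,r)1}.

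Write $\hsB_1^r(\la)=\{\la^{(0)},\ldots,\la^{(t)}\}$ with $s^+\cdot\la^{(i)}=\la^{(i+1)}$. A direct unpacking of the definition of $s^+$ shows that
\[\ol{s^+\cdot\nu}=(\ol{\nu_2-1},\,\ol{\nu_1+1})\qquad\text{in }(\mbz/l\mbz)^2\]
for every $\nu\in\mbz^2$ (checking the cases $\nu\in\St$ and $\nu\notin\St$ separately). Hence $(s^+)^2$ acts as the identity modulo $l$, and $\ol{s^+\cdot\nu}=\bar\nu$ exactly when $\nu\in\St$. Since $\la\in\sJ$ is non-Steinberg, and this property is preserved by $s^{\pm}$, every $\la^{(i)}$ is non-Steinberg, so the reductions $\ol{\la^{(i)}}$ strictly alternate between two distinct classes. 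Writing $\la=\la^{(i_0)}$, one class is $\lb=\ol{\la^{(i_0)}}$ and the other is $\ol{\la^{(i_0-1)}}=\ol{s^-\cdot\la}=\ol\mu$ (when $i_0=0$, use $\ol{s^-\cdot\la}=\ol{s^+\cdot\la}=\ol{\la^{(1)}}$ via the period-two identity). Thus \eqref{relation of blocks} gives $\sB_1^r(\lb)=\{\lb,\ol\mu\}$.

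Now assume $|\hsB_1^r(\la)|=t+1\geq 5$, so $t\geq 4$. The set $\{1,\ldots,t-1\}$ then contains integers of both parities, so one may choose $i_\la,i_\mu\in\{1,\ldots,t-1\}$ with $\ol{\la^{(i_\la)}}=\lb$ and $\ol{\la^{(i_\mu)}}=\ol\mu$. By Proposition \ref{restriction of projective module},
\[P_1(\lb)\cong\h P_1(\la^{(i_\la)})\big|_{\tuk(2,r)},\qquad P_1(\ol\mu)\cong\h P_1(\la^{(i_\mu)})\big|_{\tuk(2,r)}.\]
Lemma \ref{block for sq(2,r)1} describes $\h P_1(\la^{(i_\la)})$ with top and bottom composition factor $\h L_1(\la^{(i_\la)})$ and middle layer $\h L_1(\la^{(i_\la-1)})\oplus\h L_1(\la^{(i_\la+1)})$. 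Restriction replaces each $\h L_1(\la^{(k)})$ by $L_1(\ol{\la^{(k)}})$; since $i_\la\pm 1$ have opposite parity from $i_\la$, both middle factors become $L_1(\ol\mu)$, giving the asserted structure for $P_1(\lb)$. The same argument with $i_\mu$ yields the structure of $P_1(\ol\mu)$.

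The main obstacle is handling the endpoint cases $i_0\in\{0,t\}$: the remark following Proposition \ref{restriction of projective module} warns that $\h P_1(\la^{(0)})|_{\tuk(2,r)}$ and $\h P_1(\la^{(t)})|_{\tuk(2,r)}$ need not be isomorphic to $P_1(\ol{\la^{(0)}})$ and $P_1(\ol{\la^{(t)}})$, so one cannot hope to read the structure off $\h P_1(\la)$ itself. The role of the hypothesis $|\hsB_1^r(\la)|\geq 5$ is precisely to guarantee that both parity classes $\lb$ and $\ol\mu$ are represented by some interior index in $\{1,\ldots,t-1\}$, allowing both projective covers to be extracted from the interior case of Lemma \ref{block for sq(2,r)1} regardless of where $\la$ sits in its orbit.
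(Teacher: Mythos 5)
Your proposal is correct and follows essentially the paper's route: reduce the $s_k(2,r)$-block modulo $l$ via \eqref{relation of blocks}, and obtain the projective covers by restricting interior modules $\h P_1(\la^{(i)})$ through \ref{restriction of projective module} combined with the structure in \ref{block for sq(2,r)1}. The one genuine difference is that the paper applies \ref{restriction of projective module} directly to $\la$ and $\mu=s^-\cdot\la$ themselves, which tacitly uses that both are interior in $\hsB_1^r(\la)$ when $|\hsB_1^r(\la)|\geq 5$ (this holds because $\la\in\sJ$ has the form $\la=\mu'+l(s,s)$ with $s\geq 1$, so the orbit indices sit symmetrically about $0$); your parity argument, choosing arbitrary interior indices representing each of the two residue classes, cleanly sidesteps that verification and only needs $t\geq 4$. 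Two small points to tighten: the identity $\sB_1^r(\lb)=\{\lb,\ol{\mu}\}$ requires $\hsB_1^r(\la)$ to contain more than one weight, which under $l\leq r$ and $\la\in\sJ$ follows from \ref{non-semisimple block} (or directly from \ref{block2}); and your final step only yields the composition factors of the restriction, so to justify the displayed Loewy picture (simple socle $L_1(\lb)$ at the bottom) you should, as the paper does, invoke \ref{socle-indc} to see that the socle is unchanged on restriction, the head being clear since the restricted module is the projective cover of $L_1(\lb)$.
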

\begin{proof}
By \ref{blocks for u(2,r)}(3), we have $\sB_1^r(\lb)=\{\ol{\la},\ol{\mu}\}$. If $|\hsB_1^r(\la)|\geq 5$, then by \ref{restriction of projective module}, we have $\h P_{1}(\la)|_{\tuk(2,r)}\cong  P_{1}(\ol{\la})$ and
$\h P_{1}(\mu)|_{\tuk(2,r)}\cong  P_{1}(\ol{\mu})$. Now using \ref{socle-indc} and \ref{restriction of projective module} we get the structure of $P_{1}(\lb)$ and $P_{1}(\mb)$.
\end{proof}

Using \ref{basic algebra of sl2} and \ref{projective cover} we can determine the basic algebra of the block $\sB_1^r(\lb)$ in the case of $|\hsB_1^r(\la)|\geq 5$.
\begin{Coro}\label{basic algebra}
Assume $\la\in\sJ$ and $|\hsB_1^r(\la)|\geq 5$. Then
the basic algebra $B$ of the block $\sB_1^r(\lb)$ is isomorphic to the algebra $\mathscr A$ defined in \S3.
\end{Coro}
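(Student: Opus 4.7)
The plan is to mirror the proof of Lemma \ref{basic algebra of sl2} step by step, using Corollary \ref{projective cover} in place of Lemma \ref{Pj} as the structural input. Writing $B = \End_{\tuk(2,r)}(P_1(\lb) \oplus P_1(\mb))$ for the basic algebra of the block $\sB_1^r(\lb)$, the goal is to construct an explicit algebra isomorphism $f: \mathscr A \to B$ sending the generators of $\mathscr A$ to analogues of the maps $\al_1^{(j)}, \al_2^{(j)}$ from \S3.

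First I would compute $\dim_k B = 8$. By \ref{projective cover} each of $P_1(\lb), P_1(\mb)$ has four composition factors, with each of $L_1(\lb), L_1(\mb)$ appearing with multiplicity two. Since $k$ is a splitting field and $\dim_k \Hom_{\tuk(2,r)}(P_1(S), M) = [M:S]$ for any simple $S$ and module $M$, each of the four Hom pieces constituting $B$ has dimension $2$, yielding $\dim_k B = 8 = \dim_k \mathscr A$.

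Next I would produce explicit bases $\{\al_1^*, \al_2^*\}$ of $\Hom_{\tuk(2,r)}(P_1(\lb), P_1(\mb))$ and $\{\bt_1^*, \bt_2^*\}$ of $\Hom_{\tuk(2,r)}(P_1(\mb), P_1(\lb))$, constructed by imitating the definitions of $\al_1^{(j)}, \al_2^{(j)}$ in \S3 on the radical layers of $P_1(\lb)$ and $P_1(\mb)$: the top of $\rad P_1(\lb)$ is $L_1(\mb) \oplus L_1(\mb)$, so the two projections yield two distinct surjections $P_1(\lb) \twoheadrightarrow L_1(\mb)$ which lift through the projective cover $P_1(\mb) \twoheadrightarrow L_1(\mb)$ to two maps $P_1(\lb) \to P_1(\mb)$; proceed symmetrically in the opposite direction. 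A cleaner route is to use the isomorphisms $P_1(\lb) \cong \h P_1(\la)|_{\tuk(2,r)}$ and $P_1(\mb) \cong \h P_1(\mu)|_{\tuk(2,r)}$ supplied in the proof of \ref{projective cover}, transporting the explicit maps $\al_1^{(j)}, \al_2^{(j)}, \al_1^{(l-j)}, \al_2^{(l-j)}$ of \S3 into this setting.

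Finally, one defines $f: \mathscr A \to B$ by $f(\al_i) = \al_i^*$, $f(\bt_i) = \bt_i^*$ and verifies the relations generating $\mathscr I$: each composition $\al_i^* \bt_j^*$ or $\bt_i^* \al_j^*$ lies in a two-dimensional local endomorphism ring, and the requisite vanishings and equalities are read off by tracking the compositions through the submodule filtrations from \ref{projective cover}. Then $f$ is well-defined; it is surjective because its image, together with the two block idempotents, spans $B$; and since $\dim_k \mathscr A = \dim_k B = 8$ it is an isomorphism. The main obstacle is the relation check itself: in Lemma \ref{basic algebra of sl2} one had an explicit $k$-basis of $P_j$ over $u_k'(2)$ on which to compute compositions directly, whereas here only the abstract Loewy structure from \ref{projective cover} is available, so bridging this gap via the $s_k(2,r)$-restriction identification is the key technical step.
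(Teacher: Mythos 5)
There is a genuine gap, and you have in fact named it yourself without closing it: the relation check ``read off by tracking the compositions through the submodule filtrations from \ref{projective cover}'' cannot be carried out, because \ref{projective cover} only gives the abstract Loewy structure of $P_1(\lb)$ and $P_1(\mb)$, and the Loewy/Cartan data alone do not let you evaluate compositions such as $\al_1^*\bt_2^*$ or decide which of them vanish; nor does it even give you that $\bt_1^*\al_1^*\neq 0$, which your surjectivity argument needs. Your ``cleaner route'' does not repair this: \ref{restriction of projective module} identifies $P_1(\lb)$ with $\h P_1(\la)|_{\tuk(2,r)}$, but the explicit maps $\al_1^{(j)},\al_2^{(j)}$ of \S3 are defined on the Chari--Premet modules $P_j$ over $u_k'(2)$, not on $\h P_1(\la)$, so ``transporting'' them presupposes an isomorphism between $P_1(\lb)$ and $P_{i+1}$ (where $i=\la_1-\la_2$) that you never establish. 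That identification is precisely the missing idea.

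The paper's proof supplies it and thereby avoids all relation checking: since $\tuk(2,r)$ is a quotient of $u_k'(2)$ by \eqref{image of sl2}, the simples satisfy $L_1(\lb)\cong L_{i+1}$ and $L_1(\mb)\cong L_{l-i-1}$ as $u_k'(2)$-modules; projectivity of $P_{i+1}$ and $P_{l-i-1}$ over $u_k'(2)$ yields maps $P_{i+1}\to P_1(\lb)$ and $P_{l-i-1}\to P_1(\mb)$ lifting the head projections, which are surjective, and the dimension count from \ref{Pj} together with \ref{projective cover} (both sides have dimension $2l$) forces them to be isomorphisms. Then $B=\End_{\tuk(2,r)}(P_1(\lb)\oplus P_1(\mb))=\End_{u_k'(2)}(P_{i+1}\oplus P_{l-i-1})\cong\mathscr A$ directly by \ref{basic algebra of sl2}, with no need to recompute generators or relations. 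Your dimension count $\dim_k B=8$ is fine (granting $\End$ of each simple is $k$), but without the $u_k'(2)$-module identification of the projective covers, the isomorphism with the specific algebra $\mathscr A$, rather than with some algebra having the same Cartan matrix, is not proved.
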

\begin{proof}
Let $\mu=s^-\cdot\la$ and $i=\la_1-\la_2$. Since $\la\in\sJ$ we have $0\leq i\leq l-2$. Thus $\mu=(\la_2-1,\la_1+1)$. By \eqref{image of sl2} we conclude that $L_1(\la)$ and $L_1(\mu)$ are irreducible $u_k'(2)$-modules, and hence we have $L_1(\lb)\cong L_{i+1}$ and $L_1(\mb)\cong L_{l-i-1}$. Let $\pi_\lb$ be the natural  homomorphism from $P_1(\lb)$ to $L_1(\lb)$ and $\pi_\mb$ be the natural homomorphism from $P_1(\mb)$ to $L_1(\mb)$. Since $P_{i+1}$ and $P_{l-i-1}$ are projective $u_k'(2)$-modules, there exist $u_k'(2)$-module homomorphisms $g_{\lb}:P_{i+1}\ra P_1(\lb)$ and $g_{\mb}:P_{l-i-1}\ra P_1(\mb)$ such that $\pi_{\lb}\circ g_\lb=L_1(\lb)$ and $\pi_{\mb}\circ g_\mb=L_1(\mb)$. By \ref{projective cover} we conclude that $g_\lb$ and $g_\mb$ are all surjective. Furthermore by \ref{Pj} and \ref{projective cover}, we have  $\dim_kP_{i+1}=\dim_kP(\lb)$ and $\dim_kP_{l-i-1}=\dim_kP(\mb)$. Thus we have $P_{i+1}\cong P_1(\lb)$ and $P_{l-i-1}\cong P_1(\mb)$. Consequently, by \eqref{image of sl2} and \ref{basic algebra of sl2} we have
$B=\End_{u_k(2,r)}(P_1(\lb)\oplus P_1(\mb))\cong\End_{u_k'(2)}(P_{i+1}\oplus P_{l-i-1})\cong\mathscr A$.
\end{proof}

\section{The basic algebra of $\End_{u_k(2)}(\Ogk^{\otimes r})$}
Using the result of \S4, we will determine the basic algebra of $\End_{u_k(2)}(\Ogk^{\otimes r})$ in this section.
By \cite[8.2]{DFW1},  $\tuk(2,r)=S_k(2,r)$ is semisimple for $l> r$. Hence
if $l> r$, then $\End_{u_k(2)}(\Ogk^{\otimes r})=\End_{\tuk(2,r)}(\Ogk^{\otimes r})$ is semisimple and it is the quotient of Hecke algebra modulo the kernel of the action on the tensor space. So we assume $l\leq r$ from now on.

Write  $\Ogk^{\otimes r}=\oplus_{1\leq i\leq h}m_iE_i$ with $m_i\geq 1$, where the $E_i$ are indecomposable $\tuk(2,r)$-modules and pairwise non-isomorphic. Let $N=\oplus_{1\leq i\leq h}E_i$.  Then  $$\La_r:=\End_{u_k(2)}(N)=\End_{\tuk(2,r)}(N)$$ is the basic algebra of
$\End_{u_k(2)}(\Ogk^{\ot r})$. Let $B$ be a block of $\tuk(2,r)$. We will denote $N(B)=\oplus_{E_i\in B}E_i$, where $E_i\in B$ means that every composition factor of $E_i$ belongs to $B$. Then we have $N=\oplus_BN(B)$, where $B$ runs through all blocks of  $\tuk(2,r)$. Furthermore, we let $\La_r(B)=\End_{\tuk(2,r)}(N(B))$.
Then
$$\La_r=\prod_B \La_r(B).$$
By \ref{blocks for u(2,r)} we have
\begin{equation}\label{deco}
\La_r\cong k^a\times \prod_{\lb\in\ol{\sJ}}\La_r(\sB_1^r(\lb)),
\end{equation}
where $a=\#\{B\mid B\text{ is the semisimple block of } \tuk(2,r),\,N(B)\not=0\}$.
Now we will first calculate the number $a$ and $|\ol{\sJ}|$.
Clearly, we have
$$
\sJ=
\begin{cases}
\{(\frac{r+i}{2},\frac{r-i}{2})\mid 0\leq i\leq l-2,\,r-i\text{ is  even}\}&\text{if $r\geq 2l-2$,}\\
\{(\frac{r+i}{2},\frac{r-i}{2})\mid 2l-r-2\leq i\leq l-2,\,r-i\text{ is even}\}&\text{if $r< 2l-2$.}
\end{cases}
$$
Thus we immediately get the following results.
\begin{Lem}
$(1)$ if $r\geq 2l-2$, then $|\ol{\sJ}|=\frac{l-1}{2}$;

$(2)$ If $r<2l-2$, then
$$
|\ol{\sJ}|=
\begin{cases}
\frac{r-l+1}{2}&\text{if $r$ is even,}\\
\frac{r-l+2}{2}&\text{if $r$ is odd.}
\end{cases}
$$
\end{Lem}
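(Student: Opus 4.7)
The plan is to reduce the lemma to two essentially independent tasks: first, I will show that the reduction-mod-$l$ map $\sJ \to \ol{\sJ}$ is a bijection, so that $|\ol{\sJ}| = |\sJ|$; second, I will count $|\sJ|$ directly in each of the stated cases using the two displayed descriptions of $\sJ$ that appear just before the lemma.

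For the bijectivity claim, I would suppose that two elements $(\tfrac{r+i_1}{2},\tfrac{r-i_1}{2})$ and $(\tfrac{r+i_2}{2},\tfrac{r-i_2}{2})$ of $\sJ$ have equal images in $(\mbz/l\mbz)^2$. Since both $i_1$ and $i_2$ share the parity of $r$, the integer $\tfrac{i_1 - i_2}{2}$ is well defined, and equality of first coordinates modulo $l$ gives $\tfrac{i_1-i_2}{2} \equiv 0 \pmod l$. Because $l$ is odd, $2$ is invertible modulo $l$, so $i_1 \equiv i_2 \pmod l$; as $i_1, i_2 \in [0, l-2]$, this forces $i_1 = i_2$. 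Hence the bar map is injective on $\sJ$, and $|\ol{\sJ}| = |\sJ|$.

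For the counting, I would split into the two cases. When $r \geq 2l-2$, the index $i$ ranges over $[0, l-2]$ subject to $i \equiv r \pmod 2$; since $l-1$ is even, exactly $\tfrac{l-1}{2}$ integers in $[0, l-2]$ have any prescribed parity, yielding assertion~(1). When $l \leq r < 2l-2$, the index $i$ ranges over $[2l-r-2, l-2]$, an interval of length $r-l+1$; its left endpoint $2l-r-2$ has the parity of $r$ while its right endpoint $l-2$ is odd (since $l$ is odd). A routine parity count then gives $\tfrac{r-l+1}{2}$ admissible indices when $r$ is even, and $\tfrac{r-l+2}{2}$ admissible indices when $r$ is odd, producing assertion~(2).

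There is no substantive obstacle: the sole place where the hypothesis that $l$ is odd is essential is in inverting $2$ modulo $l$ for the injectivity step; everything else is a careful bookkeeping of parities at the endpoints of the interval of allowable $i$, made transparent by the two displayed formulas for $\sJ$.
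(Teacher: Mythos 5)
Your proof is correct and follows essentially the same route as the paper, which states the count as immediate from the displayed description of $\sJ$: you simply make explicit the direct parity count over the interval of admissible $i$ together with the (easy) fact that reduction mod $l$ is injective on $\sJ$, a point the paper leaves implicit. Both your injectivity argument (invertibility of $2$ mod $l$, or equivalently $|i_1-i_2|\leq l-2$) and your endpoint-parity bookkeeping check out in all cases.
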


\begin{Lem}\label{iso types}
The isomorphism
types of the indecomposable summands of $\Ogk^{\otimes r}$ as an $\tuk(2,r)$-module are
precisely

(i) the simple modules $L_1(\lb)$ for $\la\in\sI\cup\sJ;$

(ii) the simple modules $L_1(\lb)$ for $\lb\in\ol{\mathrm{St}};$

(iii) the modules  $P_1(\lb)$ and $P_1(\mb)$, where $\la\in\sJ$, $\mu=s^+\cdot\la$ and $|\h\sB_1^r(\la)|\geq 5;$

(iv) the module $P_1(\lb)$, where $\la\in\sJ$ and $|\h\sB_1^r(\la)|= 3$.
\end{Lem}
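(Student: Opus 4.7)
The plan is to analyze $\Og_k^{\otimes r}$ as a $u_k'(2)$-module via \eqref{image of sl2} and match its indecomposable summands to $\tu_k(2,r)$-modules using the block classification of \S4. Since $u_k'(2)$ is a Hopf algebra whose indecomposable projectives are precisely the PIMs $P_j$ ($1\leq j\leq l-1$) together with $L_l=P_l$, and tensoring with any module preserves projectivity, every indecomposable summand of $L_2^{\otimes r}=\Og_k^{\otimes r}$ is either a simple $L_j$ or a PIM $P_j$; translated to $\tu_k(2,r)$-modules, each summand is thus some $L_1(\lb)$ or $P_1(\lb)$ with $\lb\in\ol{\Ga_1^r(D)}$.

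In each semisimple block of $\tu_k(2,r)$ (indexed by $\ol{\St}\cup\ol{\sI}$ via \ref{blocks for u(2,r)}), the only indecomposables are simples, and a straightforward weight-space computation, using that $\Og_k^{\otimes r}$ has nonzero weight space at every $(a,b)$ with $a+b=r$, shows $L_1(\lb)$ occurs for each $\lb$ in these sets, giving type (ii) and the $\sI$-portion of (i). For a non-semisimple block $\sB_1^r(\lb)=\{\lb,\ol\mu\}$ with $\la\in\sJ$ and $\mu=s^-\cdot\la$ (\ref{projective cover}), the possible summand types are $L_1(\lb),L_1(\ol\mu),P_1(\lb),P_1(\ol\mu)$. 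Writing $\hsB_1^r(\la)=\{\la^{(0)},\ldots,\la^{(t)}\}$ for the corresponding $s_k(2,r)$-block, the interior PIMs $\h P_1(\la^{(j)})$ for $1\leq j\leq t-1$ restrict to $P_1(\ol{\la^{(j)}})$ by \ref{restriction of projective module}; since $s^+$ alternates between two residue classes modulo $l$, the set $\{\ol{\la^{(j)}}:1\leq j\leq t-1\}$ equals $\{\lb,\ol\mu\}$ when $t\geq 3$ (giving type (iii)) and equals $\{\lb\}$ when $t=2$ (giving type (iv)). A character computation comparing the binomial character of $\Og_k^{\otimes r}$ with those of the candidate summands then forces the simple $\h L_1(\la)$ (but not $\h L_1(\mu)$) to appear with positive multiplicity in the $s_k(2,r)$-decomposition, contributing the $\sJ$-portion of (i) after restriction via \ref{socle-indc}.

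The principal technical step is establishing the asymmetry between the two simples in each non-semisimple block: showing that only $L_1(\lb)$ (with $\la\in\sJ$) appears as a summand while $L_1(\ol\mu)$ does not. The definition of $\sJ$, which restricts to weights with $0\leq\la_1-\la_2\leq l-2$ subject to the appropriate parity constraint, is tailored to single out exactly the surviving simple; verifying this cleanly requires careful tracking of weight-space dimensions together with the explicit PIM structures from \ref{block for sq(2,r)1} and \ref{projective cover}, ruling out both endpoint simples $\h L_1(\la^{(0)}),\h L_1(\la^{(t)})$ and endpoint PIMs $\h P_1(\la^{(0)}),\h P_1(\la^{(t)})$ as potential summands. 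Once this multiplicity count is established, parts (i)--(iv) follow directly from combining the block analysis with Lemma \ref{socle-indc}.
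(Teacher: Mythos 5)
Your overall strategy (decompose $\Ogk^{\otimes r}$ directly, using \eqref{image of sl2}, into simples and PIMs and then sort the summands by block) is not the route the paper takes: the paper simply quotes the known decomposition of $\Ogk^{\otimes r}$ as an $s_k(2,r)$-module from \cite[\S 6]{EF} — namely that the indecomposable summands are the simples $\h L_1(\la)$ for $\la\in\sI\cup\sJ\cup\St$ and the ``interior'' projectives $\h P_1(s^m\cdot\la)$, $\la\in\sJ$, $|m|\leq\frac12(|\hsB_1^r(\la)|-3)$ — and then restricts this list via Proposition \ref{restriction of projective module} and Lemma \ref{socle-indc}. What you propose is essentially to reprove that result of \cite{EF} from scratch, and as written the proposal does not actually do so.

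Two concrete gaps. First, your opening claim that every indecomposable summand of $L_2^{\otimes r}$ is either a simple $L_j$ or a PIM $P_j$ does not follow from ``tensoring with any module preserves projectivity''; that only shows $P\otimes M$ is projective. One needs an induction on $r$ using the fusion rules at the root of unity (e.g.\ $L_j\otimes L_2\cong L_{j+1}\oplus L_{j-1}$ in the appropriate range and $L_l\otimes L_2\cong P_{l-1}$), or equivalently the tilting-module structure; none of this is supplied. Second, and more seriously, the entire quantitative content of the lemma — that the simples occurring are exactly those indexed by $\sI\cup\sJ\cup\ol{\St}$, that in each non-semisimple block only $L_1(\lb)$ with $\la\in\sJ$ (and not $L_1(\ol\mu)$) survives, and that the endpoint projectives $\h P_1(\la^{(0)})$, $\h P_1(\la^{(t)})$ never occur — is deferred to an unspecified ``straightforward weight-space computation'' and ``character computation.'' A nonzero weight space does not by itself produce a direct summand, and ruling out the endpoint PIMs and the complementary simples is precisely the delicate multiplicity count carried out in \cite[\S 6]{EF}; asserting that it ``requires careful tracking'' is naming the problem, not solving it. If you want to avoid citing \cite{EF}, you must carry out that induction and character bookkeeping explicitly; otherwise the short correct proof is the paper's: invoke \cite[\S 6]{EF} for the $s_k(2,r)$-decomposition and transfer it to $\tuk(2,r)$ by Proposition \ref{restriction of projective module} together with Lemma \ref{socle-indc}.
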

\begin{proof}
By \cite[\S6]{EF}
the isomorphism
types of the indecomposable summands of $\Ogk^{\otimes r}$ as an $s_k(2,r)$-module are
precisely
\begin{itemize}
\item[(i)]
the simple modules $\h L_1(\la)$ for
$\la\in\sI\cup\sJ;$
\item[(ii)] the simple modules $\h L_1(\la)$ for $\la\in\St;$
\item[(iii)]
the modules  $\h P_1(s^m\cdot\la)$ for $\la\in\sJ$ and $|m|\leq\frac{1}{2}(|\h\sB_1^r(\la)|-3)$.
\end{itemize}
Thus the assertion follows from
\ref{restriction of projective module}.
\end{proof}

\begin{Coro}
$(1)$ If $r\geq 2l-2$, then $a=1$.

$(2)$ If $l\leq r<2l-2$ and $r$ is even, then $a=l-\frac{r}{2}$;

$(3)$ If $l\leq r<2l-2$ and $r$ is odd, then $a=l-\frac{r+1}{2}$.
\end{Coro}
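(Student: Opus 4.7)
The plan is to count the semisimple blocks of $\tuk(2,r)$ that carry an indecomposable summand of $\Ogk^{\otimes r}$, and then enumerate them by combining \ref{blocks for u(2,r)} with \ref{iso types}. First I would use \ref{blocks for u(2,r)}(2) to note that every semisimple block of $\tuk(2,r)$ has the form $\sB_1^r(\lb)$ with $\lb\in\ol{\St}\cup\ol{\sI}$, and argue that each such block contains a single simple module (since by \eqref{relation of blocks} and the singleton nature of the corresponding $\hsB_1^r(\la)$ for $\la\in\sI\cup\St$, the image $\sB_1^r(\lb)$ is again a singleton). Then, by \ref{iso types}(i)--(ii), the simple summands of $\Ogk^{\otimes r}$ lying in semisimple blocks are exactly the modules $L_1(\lb)$ for $\la\in\sI$ together with those for $\lb\in\ol{\St}$; hence $a=|\ol{\sI}|+|\ol{\St}|$ once disjointness of these two subsets of $(\mbz/l\mbz)^2$ is verified.

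That disjointness is immediate: $\la\in\sI$ forces $0\leq\la_1-\la_2\leq l-2$ while $\la\in\St$ forces $\la_1-\la_2\equiv -1\pmod{l}$, so the residues cannot coincide. The reduction $\sI\to\ol{\sI}$ is injective because two distinct elements of $\sI$ differ in the first coordinate by at most $(l-2)/2<l$; hence $|\ol{\sI}|=|\sI|$. For $\ol{\St}$, inspecting the four explicit parametrizations in \ref{St} shows that within each subcase the listed elements share a common reduction modulo $l$ (the parameter $m$ contributes only an integer multiple of $l$ in each coordinate), so $|\ol{\St}|$ equals $0$ or $1$ depending on whether $\St$ is empty.

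Finally I would dispatch the three numerical cases. For $r\geq 2l-2$ the constraint $i<2l-r-2$ forces $\sI=\emptyset$, while a quick check of the four cases of \ref{St} shows $\St\neq\emptyset$ in each (for instance, when $s=1$ we have $t\in\{l-2,l-1\}$, and $t=l-2$ is odd so \ref{St}(1) applies with $|\St|=1$, while $t=l-1$ falls into \ref{St}(3)). This gives $a=1$. For $l\leq r<2l-2$ write $t=r-l\in[0,l-3]$; since $l$ is odd, $r$ is even iff $t$ is odd. When $r$ is even, \ref{St}(1) gives $|\St|=1$ and counting the even values of $i$ in the admissible range $[0,l-t-4]$ yields $|\sI|=(l-t-2)/2$, so $a=l-r/2$. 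When $r$ is odd, \ref{St}(2) gives $\St=\emptyset$ and a parallel count of odd $i$ yields the remaining formula. The main obstacle is the careful parity bookkeeping in this last step; all the conceptual input (identification of summand types, block structure, singleton property of semisimple blocks) is already prepared in the earlier sections.
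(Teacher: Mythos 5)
Your overall strategy is the same as the paper's: reduce to $a=|\ol{\St}|+|\ol{\sI}|$ via \ref{blocks for u(2,r)} and \ref{iso types}, check disjointness and that reduction mod $l$ is injective on $\sI$, and then enumerate $\St$ and $\sI$ case by case. Your handling of the disjointness, of $|\ol{\sI}|=|\sI|$, and of cases (1) and (2) is correct and agrees with the paper's computation.

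The gap is in case (3), where your two assertions are incompatible with the formula you claim to obtain. You correctly state that for $l\leq r<2l-2$ with $r$ odd one has $\St=\emptyset$: here $r=l+t$ with $t$ even, $t\leq l-3$, and a Steinberg weight $\la=\mu+l\nu$ ($\mu\in P_1(D)$, $\nu\in P(D)$) forces $\mu_1-\mu_2=l-1$ and $2\mu_2+l(\nu_1+\nu_2)=t+1$, which is impossible since $t+1$ is odd and $t+1<l$; this matches \ref{St}(2) with $s=1$ (empty range of $m$). But then the ``parallel count'' gives $a=|\ol{\sI}|=\#\{i \text{ odd}\mid 1\leq i\leq 2l-r-4\}=\frac{2l-r-3}{2}=l-\frac{r+3}{2}$, not the stated $l-\frac{r+1}{2}$: the extra $1$ in the stated formula comes exactly from a contribution $|\ol{\St}|=1$, which the paper's own proof asserts unconditionally but which you have just (correctly, as the computation above shows) denied in this case. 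Concretely, for $l=5$, $r=7$ one finds $\St=\emptyset$ and $\sI=\emptyset$, and both blocks of $s_k(2,7)$ contain three weights, so $a=0$, whereas $l-\frac{r+1}{2}=1$. So the final step of your case (3) does not follow from your premises: either you would have to prove $|\ol{\St}|=1$ there (which the direct computation rules out), or your count yields $l-\frac{r+3}{2}$ and hence cannot establish the statement as written. This is precisely the point where your argument and the paper's proof diverge, and your (correct) observation about $\St$ in fact exposes an off-by-one problem in the paper's treatment of this case rather than resolving it.
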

\begin{proof}
By \ref{blocks for u(2,r)} and \ref{iso types}, we have $a=|\ol\St|+|\ol{\sI}|$. By \ref{St} we have $|\ol\St|=1$. If $r\geq 2l-2$, then ${\sI}=\emptyset$ and hence $a=1$. If $l\leq r<2l-2$ and $r$ is even, then $ {\sI}=\{(\frac{r+i}{2},\frac{r-i}{2})\mid i=0,2,4,\cdots,2l-r-4\}$ and hence $a=l-\frac{r}{2}$. If $l\leq r<2l-2$ and $r$ is odd, then $ {\sI}=\{(\frac{r+i}{2},\frac{r-i}{2})\mid i=1,3,5,\cdots,2l-r-4\}$ and hence $a=l-\frac{r+1}{2}$.
\end{proof}

Finally, we will determine $\La_r(\sB_1^r(\lb))$ for $\lb\in\ol\sJ$.
\begin{Lem}\label{the number of simple modules in a block}
Assume $\la\in\sJ$. Write $\la=\mu+l\nu$, where $\mu\in P_1(D)$ and $\nu\in P(D)$. If $\mu_1\leq l-2$, then $|\h\sB_1^r(\la)|\geq 5$.
\end{Lem}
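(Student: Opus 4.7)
The plan is to reduce the claim to a purely arithmetic statement about $\nu$ and then rule out small values of $\nu_1+\nu_2$ via the defining constraints of $\sJ$.

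First, since we are assuming $\mu_1 \leq l-2$, Corollary \ref{block2}(1) applies and gives
\[
|\h\sB_1^r(\la)| = 2(\nu_1+\nu_2)+1.
\]
Hence it suffices to show $\nu_1+\nu_2 \geq 2$. Write $\la = (\tfrac{r+i}{2},\tfrac{r-i}{2})$ with $0 \leq i \leq l-2$, $i \geq 2l-r-2$ and $r-i$ even, as in the definition of $\sJ$. I will argue by ruling out $\nu_1+\nu_2=0$ and $\nu_1+\nu_2=1$.

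For $\nu_1+\nu_2=0$, we have $\la=\mu$, so $\la_1 = \mu_1 \leq l-2$; that is, $\tfrac{r+i}{2}\leq l-2$, or equivalently $i \leq 2l-r-4$. This contradicts the defining inequality $i \geq 2l-r-2$ for $\sJ$.

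For $\nu_1+\nu_2=1$, either $(\nu_1,\nu_2)=(1,0)$ or $(0,1)$. In the first case
\[
i = \la_1-\la_2 = (\mu_1-\mu_2)+l \geq l,
\]
using $\mu_1 \geq \mu_2$ from $\mu \in P_1(D)$; this contradicts $i \leq l-2$. In the second case
\[
i = \la_1-\la_2 = (\mu_1-\mu_2)-l \leq (l-1)-l = -1,
\]
again using $\mu \in P_1(D)$, which contradicts $i \geq 0$. Hence $\nu_1+\nu_2 \geq 2$ and the lemma follows.

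There is no real obstacle here; the only thing that requires a moment of care is keeping track of the two constraints on $\mu$ coming from $P_1(D)$ (namely $0 \leq \mu_1-\mu_2 \leq l-1$ and $0 \leq \mu_2 \leq l-1$) together with the hypothesis $\mu_1 \leq l-2$, and matching them against the range of $i$ built into the definition of $\sJ$.
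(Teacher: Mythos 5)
Your proof is correct and follows essentially the same route as the paper: both reduce via Corollary \ref{block2}(1) to showing $\nu_1+\nu_2\geq 2$ and then extract this from the inequalities $2l-r-2\leq i\leq l-2$ defining $\sJ$ together with $\mu\in P_1(D)$. The only difference is organizational: the paper first observes that $\nu=(s,s)$ (since $0\leq\la_1-\la_2\leq l-2$) and shows $s\geq 1$ by one inequality chain, giving $|\h\sB_1^r(\la)|=4s+1\geq 5$, whereas you rule out $\nu_1+\nu_2\in\{0,1\}$ case by case, which is equally valid.
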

\begin{proof}
Write $\la_2=sl+t$, where $s\geq 0$ and $0\leq t<l$. Let $i=\la_1-\la_2$. Then $\la=(i+t,t)+l(s,s)$. Since $\la\in\sJ$, we have $(i+t,t)\in P_1(D)$ and hence $\mu=(i+t,t)$ and $\nu=(s,s)$. Now we assume $\mu_1\leq l-2$. Since $\la\in\sJ$, we have $i\geq 2l-r-2$. Thus, since $t=\la_2-sl=\frac{r-i}{2}-sl$, we have
$$0\leq l-2-\mu_1=l-2-(i+t)=(s+1)l-2-\frac{r+i}{2}\leq (s+1)l-2-l+1=sl-1.$$
It follows $s\geq 1$. Thus by \ref{block2}, we have $|\h\sB_1^r(\la)|=4s+1\geq 5$.
\end{proof}

\begin{Coro}\label{equivalent condition}
Assume $\la\in\sJ$. Then $|\h\sB_1^r(\la)|=3$ if and only if $\la\in P_1(D)$.
\end{Coro}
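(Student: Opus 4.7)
The plan is to deduce this as a short combinatorial consequence of Corollary \ref{block2} together with Lemma \ref{the number of simple modules in a block}. The key observation is that $\la \in \sJ$ forces the $\mu_1 \geq l-1$ branch of Corollary \ref{block2} exactly when $\la \in P_1(D)$, and in that branch the block size is $2(\nu_1+\nu_2)+3$.

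For the $(\Leftarrow)$ direction, assume $\la \in P_1(D)$. Then the unique decomposition $\la = \mu + l\nu$ with $\mu \in P_1(D)$ and $\nu \in P(D)$ has $\mu = \la$ and $\nu = 0$. Writing $\la = \bigl(\tfrac{r+i}{2},\tfrac{r-i}{2}\bigr)$ with $i = \la_1-\la_2$, the defining inequality $i \geq 2l-r-2$ of $\sJ$ gives $\mu_1 = \la_1 = \tfrac{r+i}{2} \geq l-1$. So Corollary \ref{block2}(2) applies and yields $|\hsB_1^r(\la)| = 2\nu_2 + 1 - (-2\nu_1 - 1) + 1 = 2(\nu_1+\nu_2)+3 = 3$.

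For the $(\Rightarrow)$ direction, assume $|\hsB_1^r(\la)|=3$ and write $\la = \mu + l\nu$ as above. If $\mu_1 \leq l-2$, Lemma \ref{the number of simple modules in a block} forces $|\hsB_1^r(\la)| \geq 5$, a contradiction; hence $\mu_1 \geq l-1$. Applying Corollary \ref{block2}(2) again gives $2(\nu_1+\nu_2)+3 = 3$, so $\nu_1 = \nu_2 = 0$ (recall $\nu \in P(D) = \mbn^2$), and therefore $\la = \mu \in P_1(D)$.

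There is no real obstacle here: the whole argument is a one-line check in each direction once one notes that the two cases of Corollary \ref{block2} correspond precisely to whether $\la$ lies in $P_1(D)$ (given $\la \in \sJ$). The only thing worth being careful about is verifying that $\la \in \sJ \cap P_1(D)$ automatically lands in case (2) of Corollary \ref{block2}, which is exactly what the condition $i \geq 2l-r-2$ delivers.
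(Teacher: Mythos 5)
Your proof is correct and follows essentially the same route as the paper: the forward direction uses Lemma \ref{the number of simple modules in a block} to force $\mu_1\geq l-1$ and then Corollary \ref{block2}(2) to get $2(\nu_1+\nu_2)+3=3$, hence $\nu=0$, while the converse checks $\la_1=\frac{r+i}{2}\geq l-1$ from $i\geq 2l-r-2$ and applies \ref{block2}(2) with $\nu=0$. The only cosmetic difference is that the paper writes $\nu=(s,s)$ explicitly (giving $4s+3$) whereas you argue with a general $\nu\in\mbn^2$, which changes nothing.
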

\begin{proof}
Let $i=\la_1-\la_2$ and write $\la_2=sl+t$ with $s\geq 0$ and $0\leq t<l$.  Then $\la=\mu+l\nu$, where $\mu=(i+t,t)$ and $\nu=(s,s)$. Since $\la\in\sJ$, we have $\mu\in P_1(D)$.
If $|\h\sB_1^r(\la)|=3$ then by \ref{the number of simple modules in a block} we have $\mu_1\geq l-1$. Thus by \ref{block2} we have $|\h\sB_1^r(\la)|=4s+3$. It follows $s=0$ and hence $\la=\mu\in P_1(D)$.

If $\la\in P_1(D)$, then $\la=\mu$ and $\nu=0$. Let $i=\la_1-\la_2$. Since $\la\in\sJ$, we have $\mu_1=\la_1=\frac{r+i}{2}\geq l-1$. Thus by \ref{block2}, we have $|\h\sB_1^r(\la)|=3$.
\end{proof}

%\begin{Prop}\label{N}
%$(1)$ If $\la\in\sJ\cap P_1(D)$, then $N(\sB_1^r(\lb))=L_1(\lb)\oplus P_1(\lb);$

%$(2)$ If $\la\in\sJ$ and $\la\not\in P_1(D)$, then $N(\sB_1^r(\lb))=L_1(\lb)\oplus P_1(\lb)\oplus P_1(\mb)$, where $\mu=s^-\cdot\la$.
%\end{Prop}
%\begin{proof}
%The assertion follows from \ref{iso types} and \ref{equivalent condition}.
%\end{proof}

\begin{Thm}
$(1)$ If $\la\in\sJ\cap P_1(D)$, then $\La_r(\sB_1^r(\lb))$ has quiver as shown in Figure 2, and is given by
the relations $\bt\al=0$;
\begin{figure}[h]
\begin{center}
\setlength{\unitlength}{.5cm}
\begin{picture}(18,2.9)
 \put(6.9,1){$\bullet$} \put(6.7,0.3){$X$}
\put(10.9,1){$\bullet$} \put(10.7,0.3){$Y$}
  \put(7.4,1.3){\vector(1,0){3.2}}
\put(9,1.5){$\alpha$}
\put(10.6,0.9){\vector(-1,0){3.2}} \put(9,0.2){$\beta$}
\end{picture}
\caption{}
\end{center}
\end{figure}

$(2)$ If $\la\in\sJ$ and $\la\not\in P_1(D)$, then $\La_r(\sB_1^r(\lb))$ has quiver as shown in Figure 3, and is given by
the relations $\al_i\bt_j=0$, $\al_1\bt_1=\al_2\bt_2$, $\bt_i\al_j=0$,  $\ga\dt=0$, $\ga\bt_i=0$, $\al_i\dt=0$ and $\bt_1\al_1=\bt_2\al_2=\dt\ga$ for $i\not=j$.

\begin{figure}[ht]\label{Afivefiguretwo}
\begin{center}
\setlength{\unitlength}{.5cm}
\begin{picture}(12,3.1)
\put(0.4,1.6){$\bullet$} \put(0.4,0.8){$X$}
\put(5.5,1.6){$\bullet$} \put(5.5,0.8){$Y$}
\put(10,1.6){$\bullet$} \put(9.9,0.8){$Z$}
\put(1.4,2.7){\vector(1,0){3.5}} \put(3,2.9){$\alpha_{1}$}
\put(1.4,1.9){\vector(1,0){3.5}} \put(3,2.2){$\alpha_{2}$}
\put(4.8,1.55){\vector(-1,0){3.5}} \put(3,0.9){$\beta_{1}$}
\put(4.8,0.7){\vector(-1,0){3.5}} \put(3,0){$\beta_{2}$}
\put(6.2,2.1){\vector(1,0){3.2}} \put(7.4,2.3){$\delta$}
\put(9.4,1.5){\vector(-1,0){3.2}} \put(7.4,0.8){$\ga$}
\end{picture}
\caption{}
\end{center}
\end{figure}
\end{Thm}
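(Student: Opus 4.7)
The plan is to read off $N(\sB_1^r(\lb))$ from Lemma~\ref{iso types}, compute all Hom spaces among the indecomposable summands using Schur's lemma together with the Loewy structures supplied by Corollary~\ref{projective cover} (and, where applicable, Lemma~\ref{block for sq(2,r)1} with Proposition~\ref{restriction of projective module}), and then present $\La_r(\sB_1^r(\lb))=\End_{\tuk(2,r)}(N(\sB_1^r(\lb)))$ as a path algebra modulo the stated relations. Corollary~\ref{equivalent condition} separates the two cases.

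For case $(1)$ we have $|\hsB_1^r(\la)|=3$, and writing $\mu=s^-\cdot\la=(\la_2-1,\la_1+1)$ we see $\mu_1<\mu_2$, so $\mu\notin\sI\cup\sJ$ and $L_1(\mb)$ is not an indecomposable summand of $\Ogk^{\ot r}$; Lemma~\ref{iso types}(i)(iv) then gives $N(\sB_1^r(\lb))=L_1(\lb)\oplus P_1(\lb)$. Lemma~\ref{block for sq(2,r)1} with $t=2$ and Proposition~\ref{restriction of projective module} produce the Loewy structure $L_1(\lb)/2L_1(\mb)/L_1(\lb)$ of $P_1(\lb)$. Set $X=P_1(\lb)$, $Y=L_1(\lb)$, and let $\al\in\Hom(X,Y)$ be the projection onto the head and $\bt\in\Hom(Y,X)$ the inclusion into the socle. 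Schur's lemma and the Loewy data give $\End(Y)=k$, $\Hom(X,Y)=k\al$, $\Hom(Y,X)=k\bt$ and $\End(X)=k\cdot 1\oplus k\cdot(\al\bt)$ of dimension $2$. The relation $\bt\al=0$ holds because, as an endomorphism of $Y=L_1(\lb)$, it first embeds $L_1(\lb)$ as $\soc X\subseteq\rad X$ and is then killed by the projection to $X/\rad X$. Thus $\dim\La_r(\sB_1^r(\lb))=5=\dim k\mathscr Q/(\bt\al)$ and the evident surjection is an isomorphism.

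For case $(2)$, Lemma~\ref{the number of simple modules in a block} yields $|\hsB_1^r(\la)|\geq 5$, so Lemma~\ref{iso types}(i)(iii) gives $N(\sB_1^r(\lb))=L_1(\lb)\oplus P_1(\lb)\oplus P_1(\mb)$ with $\mu=s^-\cdot\la$. Set $X=P_1(\mb)$, $Y=P_1(\lb)$, $Z=L_1(\lb)$. Corollary~\ref{basic algebra} already identifies $\End(X\oplus Y)\cong\mathscr A$, supplying generators $\al_1,\al_2\in\Hom(X,Y)$, $\bt_1,\bt_2\in\Hom(Y,X)$ satisfying the relations $\al_i\bt_j=\bt_i\al_j=0$ for $i\neq j$, $\al_1\bt_1=\al_2\bt_2$ and $\bt_1\al_1=\bt_2\al_2$. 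Corollary~\ref{projective cover} furnishes $\soc Y=Y/\rad Y=L_1(\lb)$ and $\soc X=X/\rad X=L_1(\mb)$, so Schur's lemma gives $\Hom(Z,Y)=k\ga$ (inclusion into $\soc Y$), $\Hom(Y,Z)=k\dt$ (projection onto the head of $Y$), $\Hom(Z,X)=\Hom(X,Z)=0$ and $\End(Z)=k$. The relations $\ga\bt_i=0$ and $\al_i\dt=0$ follow at once from the vanishing of these Hom spaces, and $\ga\dt=0$ in $\End(Z)$ because the composite first embeds $Z$ into $\soc Y\subseteq\rad Y$ and is then annihilated by the quotient to $Y/\rad Y$.

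The key step, and main obstacle, is the cross-relation $\dt\ga=\bt_1\al_1=\bt_2\al_2$ in $\End(Y)$. The algebra $\End(Y)=\End(P_1(\lb))$ is two-dimensional with one-dimensional radical spanned by the unique nonzero head-to-socle endomorphism of $Y$; the element $\dt\ga$ realizes precisely this endomorphism (since $\dt$ surjects $Y$ onto its head $\cong Z$ and $\gamma$ identifies $Z$ with $\soc Y$), and the elements $\bt_i\al_i$ are nonzero by Corollary~\ref{basic algebra}. Hence they agree up to a nonzero scalar, and by rescaling $\ga$ we arrange $\dt\ga=\bt_1\al_1$. Modulo the stated relations, every length-$\geq 3$ path vanishes (by the same cancellations used in the $\dim\mathscr A=8$ calculation of Lemma~\ref{basic algebra of sl2}, together with $\ga\dt=0$, $\ga\bt_i=0$ and $\al_i\dt=0$), leaving the $k$-basis $\{e_X,e_Y,e_Z,\al_1,\al_2,\bt_1,\bt_2,\ga,\dt,\al_1\bt_1,\bt_1\al_1\}$ of dimension $11$. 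The Hom computations give $\dim\La_r(\sB_1^r(\lb))=11$ as well, so the natural surjection from the path algebra is an isomorphism.
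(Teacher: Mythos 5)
Your argument is correct and follows essentially the same route as the paper: read off the summands of $N(\sB_1^r(\lb))$ from \ref{iso types} and \ref{equivalent condition}, use \ref{basic algebra} (resp.\ the Loewy structure from \ref{block for sq(2,r)1} and \ref{restriction of projective module}) for the projective part, and adjoin the one-dimensional Hom spaces to and from $L_1(\lb)$; your explicit verification of $\dt\ga=\bt_1\al_1=\bt_2\al_2$ and the dimension counts merely fill in what the paper leaves implicit in ``(2) follows from \ref{basic algebra}''. One small citation slip: in case (2) you deduce $|\hsB_1^r(\la)|\geq 5$ from \ref{the number of simple modules in a block}, which only covers $\mu_1\leq l-2$; when $\mu_1\geq l-1$ note instead that $\la\notin P_1(D)$ forces $\nu\neq 0$ and apply \ref{block2}(2) (or use \ref{equivalent condition} together with the oddness of the block sizes), so the conclusion is unaffected.
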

\begin{proof}
Let $X_0=L_1(\mb)$ and $X_1=L_1(\lb)$, where $\mu=s^-\cdot\la$.
Let $P(X_0)=P_1(\mb)$ and $P(X_1)=P_1(\lb)$. If $\la\in\sJ\cap P_1(D)$, then by \ref{iso types} and \ref{equivalent condition} we have $N(\sB_1^r(\lb))=X_1\oplus P(X_1)$, and hence $\La_r(\sB_1^r(\lb))=\End_{\tuk(2,r)}(X_1\oplus P(X_1))$. We write $\La_r(\sB_1^r(\lb))$ as
$$\left(\begin{matrix}\phi_{11}& \phi_{12}\cr \phi_{21} & \phi_{22}\end{matrix}
\right)$$ where $\phi_{11} \in {\rm End}(X_1)$, $\phi_{22}
\in {\rm End}(P(X_1))$, $\phi_{12}\in{\rm
Hom}(X_{1}, P(X_1))$ and $\phi_{21} \in {\rm Hom}(P(X_1), X_{1})$. Then
$$\La_r(\sB_1^r(\lb))\cong \left(\begin{matrix} k & k \cr k& {\rm End}(P(X_1))\end{matrix}\right).
$$
Let $\al$ be the natural surjective $\tuk(2,r)$-module homomorphism from $P(X_1)$ to $X_1$ and let $\bt$ be the natural injective $\tuk(2,r)$-module homomorphism from $X_1$ to $P(X_1)$. Then $\bt\al=0$. From this, one can easily conclude (1).

Now we assume $\la\in\sJ$ and $\la\not\in P_1(D)$. Then by
\ref{iso types} and \ref{equivalent condition}, we have $N(\sB_1^r(\lb))=X_1\oplus P(X_0)\oplus P(X_1)$, and hence
we have $\La_r( \sB_1^r(\lb))=\End_{\tuk(2,r)}(X_1\oplus P(X_0)\oplus P(X_1))$. We write $\La_r( \sB_1^r(\lb))$ as
$$\left(\begin{matrix}\phi_{11}& \phi_{12}\cr \phi_{21} & \phi_{22}\end{matrix}
\right)$$ where $\phi_{11} \in {\rm End}(X_1)$, $\phi_{22}
\in {\rm End}(P(X_0)\oplus P(X_1))$, $\phi_{12}\in{\rm
Hom}(X_1, P(X_0)\oplus P(X_1))$ and $\phi_{21} \in {\rm Hom}(P(X_0)\oplus P(X_1), X_{1})$. Then
$$\La_r( \sB_1^r(\lb))\cong \left(\begin{matrix} k & k \cr k& B\end{matrix}\right),
$$
where $ B={\rm End}(P(X_0)\oplus P(X_1))$ is the basic algebra of $\sB_1^r(\lb)$.
Thus (2) follows from \ref{basic algebra}.
\end{proof}

\section*{Acknowledgement}
We would like to thank professor Susumu Ariki for his suggestion on this problem. We also thank
professor Bang Ming Deng for some useful
discussions and thank the referee for some useful comments.

\end{document}